\newtheorem{thm}{Theorem}[section]
\newtheorem{defini}{Definition}[section]
\newtheorem{rem}{Remark}[section]
\newtheorem{lem}{Lemma}[section]
\newtheorem{prop}{Proposition}[section]
\newtheorem{coro}{Corollary}[section]
\newtheorem{ex}{Example}
\def \R {\mathbb{R} }
\def \R {\mathbb{R} }
\begin{document}

\title{Eigenvalue type problem in $s(.,.)$-fractional Musielak-Sobolev spaces
}
\subtitle{ }


\author{ Mohammed SRATI 
}


\institute{
            Mohammed SRATI \at
               High School of Education and Formation (ESEF), University Mohammed First, Oujda, Morocco. \\
                             \email{srati93@gmail.com  }                             
}

\date{Received: date / Accepted: date}

\maketitle

\begin{abstract}
In this paper, we introduce the $s(.,.)$-fractional Musielak-Sobolev spaces $W^{s(x,y)}L_{\varPhi_{x,y}}(\Omega)$. Then, we show that there exists $\lambda_*>0$ such that any $\lambda\in(0, \lambda_*)$ is an eigenvalue for the following  problem, by means of Ekeland's ariational principle
  $$\label{P}
  (\mathcal{P}_a)  \left\{ 
     \begin{array}{clclc}
  \left( -\Delta\right)^{s(x,.)}_{a_{(x,.)}} u & = & \lambda |u|^{q(x)-2}u  & \text{ in }& \Omega, \\\\
      u & = & 0 \hspace*{0.2cm}  & \text{ in } & \R^N\setminus \Omega,
     \end{array}
     \right. 
  $$
where  $\Omega$ is a bounded open subset of  $\R^N$ with $C^{0,1}$-regularity  and bounded boundary.
\subclass{ Primary 35R11; Secondary 46E35,  35J20, 47G20.}
\keywords{$s(.,.)$-Fractional Musielak-Sobolev spaces \and eigenvalue problems \and Ekeland's ariational principle.}

\end{abstract}
\section{Introduction}\label{S1}
The theory of fractional modular spaces is well developed in the last few years, particularly in the fractional Orlicz-Sobolev spaces $W^sL_\varPhi(\Omega)$ (see \cite{sr5,3,sr_mo,sal2,sal1}) and in the fractional Sobolev spaces  with variable exponents $W^{s,p(x,y)}(\Omega)$ (see \cite{SH,SRH,SS2020,bah1,bah2,ku,bah5,s2..,s2...}). The study of variational problems where the modular function  satisfies  nonpolynomial growth conditions instead of having the usual $p$-structure arouses much interest in the development of applications to electrorheological fluids as an important class of non-Newtonian fluids (sometimes referred to as smart fluids). The electro-rheological fluids are characterized by their ability to drastically change the mechanical properties under the influence of an external electromagnetic field. A mathematical model of electro-rheological fluids was proposed by Rajagopal and Ruzicka (we refer the reader to \cite{maria1,maria2,e2} for more details).\\
 
On the contrary, amalgamating the functional characteristics of variable exponent Lebesgue spaces and Orlicz spaces leads us to the Musielak-Orlicz spaces. This particular functional structure has been extensively explored since the 1950s, with Nakano \cite{ch5.na} laying the groundwork, followed by further development by Musielak and Orlicz \cite{ch5.mu,mu}.  A logical inquiry arises: can we extend this generalization to the fractional domain? Azroul et al provide an affirmative response to this question in \cite{benkirane,benkirane2,benkirane3}. In essence, the authors have introduced the fractional Musielak-Sobolev space $W^{s}L_{\varPhi_{x,y}}(\Omega)$, which serves as a natural extension of the aforementioned functional spaces. \\
  
  On the other hand in fractional spaces, or fractional Sobolev spaces,  if the order 
  $s$ depends on a variable, i.e, $s=s(.,.)$ then it is referred to as variable fractional derivative order. This situation may arise in contexts where the derivative order depends on the position or other characteristics of the independent variable. Using a variable fractional derivative order adds an additional dimension to mathematical modeling. Here are some points to consider:
  \begin{enumerate}
  \item \textbf{Local Adaptation:}  By allowing 
  $s$ to depend on 
  $x$, one can locally adapt the derivative order based on specific characteristics of the function or the region of space under study. This can be particularly useful for modeling physically variable phenomena in space.
  \item \textbf{Variable Boundary Problems:} The variable derivative order can be used to model non-homogeneous boundary problems where conditions vary depending on the position in space.
 \item \textbf{Applications in Signal Processing:} In the field of signal processing, a variable derivative order can be used to model situations where the regularity of a signal varies in time or space.
 \item \textbf{Modeling Dynamic Phenomena:} If 
 $s(.,.)$ varies with time, this can be used to model dynamic phenomena where the derivative order changes over time.
   \end{enumerate}
 However, it's important to note that introducing a variable derivative order can make mathematical analysis more complex. The resulting equations may be more challenging to solve, and advanced mathematical tools, such as integral equations or adaptive numerical methods, may be necessary. In general, the use of a variable derivative order is motivated by the need to accurately model complex and variable physical phenomena.\\
 
So, in this present work, we introduce the  $s(.,.)$-fractional Musielak-Sobolev space 
$W^{s(x,y)}L_{\varPhi_{x,y}}(\Omega)$, with the order 
  $s$ depends on a variable, i.e, $s=s(.,.)$. Also, we study the existence  of the eigenvalues of problem
involving non-local  $s(.,.)$-order operator  $\left( -\Delta\right)^{s(x,.)}_{a_{(x,.)}}$  with variable exponents $a(.,.)$.\\ 

Here we would like to emphasize that in our work we have considered the variable growth on the order $s$ as well. Consequently, our focus is to study  the following  eigenvalue problem:
 $$\label{P}
 (\mathcal{P}_a)  \left\{ 
    \begin{array}{clclc}
 \left( -\Delta\right)^{s(x,.)}_{a_{(x,.)}} u & = & \lambda |u|^{q(x)-2}u  & \text{ in }& \Omega, \\\\
     u & = & 0 \hspace*{0.2cm}  & \text{ in } & \R^N\setminus \Omega,
    \end{array}
    \right. 
 $$
  where $\Omega$ is an open bounded subset in $\R^N$, $N\geqslant 1$,   with Lipschitz boundary $\partial \Omega$, $q : \overline{\Omega}\rightarrow (1, \infty)$ is bounded continuous function, and  $(-\Delta)^{s(x,.)}_{a_{(x,.)}}$ is the nonlocal   $s(.,.)$-order  operator of elliptic type defined as follows
    {\small  $$
               \begin{aligned}
               (-\Delta)^{s(x,.)}_{a_{(x,.)}}u(x)=2\lim\limits_{\varepsilon\searrow 0} \int_{\R^N\setminus B_\varepsilon(x)} a_{(x,y)}\left( \dfrac{|u(x)-u(y)|}{|x-y|^{s(x,y)} }\right)\dfrac{u(x)-u(y)}{|x-y|^{s(x,y)}} \dfrac{dy}{|x-y|^{N+s(x,y)} }
               \end{aligned}
                $$}  
  for all $x\in \R^N$, where:\\
  $\bullet$ $s(.,.)~ : \overline{\Omega}\times\overline{\Omega}\rightarrow (0,1)$ is a continuous function such that:
  \begin{equation}
  s(x,y)=s(y,x)~~ \forall x,y \in \overline{\Omega}\times\overline{\Omega},
  \end{equation}
  \begin{equation}
  0<s^-=\inf\limits_{\overline{\Omega}\times\overline{\Omega}}s(x,y)\leqslant s^+=\sup\limits_{\overline{\Omega}\times\overline{\Omega}}s(x,y)<1.
  \end{equation}
   $\bullet$ $a_{(x,y)}(t):=a(x,y,t) : \overline{\Omega}\times\overline{\Omega}\times \R\longrightarrow \R$   is symmetric function :
 \begin{equation}\label{n4}
 a(x,y,t)=a(y,x,t) ~~ \forall(x,y,t)\in \overline{\Omega}\times\overline{\Omega}\times \R,\end{equation}
   and the function : $\varphi(.,.,.) : \overline{\Omega}\times\overline{\Omega}\times \R \longrightarrow \R$ defined by  
$$
  \varphi_{x,y}(t):=\varphi(x,y,t)= \left\{ 
          \begin{array}{clclc}
        a(x,y,|t|)t   & \text{ for }& t\neq 0, \\\\
          0  & \text{ for } & t=0,
          \end{array}
          \right. 
$$
is increasing homeomorphism from $\R$ onto itself. Let 
$$\varPhi_{x,y}(t):=\varPhi(x,y,t)=\int_{0}^{t}\varphi_{x,y}(\tau)d\tau~~\text{ for all } (x,y)\in \overline{\Omega}\times\overline{\Omega},~~\text{ and all } t\geqslant 0.$$  
Then, $\varPhi_{x,y}$ is a Musielak function (see \cite{mu}), that is
\begin{itemize}
\item[$\star$]  $\varPhi(x,y,.)$ is a $\varPhi$-function for every $(x,y)\in\overline{\Omega}\times\overline{\Omega}$, i.e.,   is continuous, nondecreasing function with $\varPhi(x,y,0)= 0$, $\varPhi(x,y,t)>0$ for $t>0$ and $\varPhi(x,y,t)\rightarrow \infty$ as $t\rightarrow \infty$.
\item[$\star$] For every $t\geqslant 0$, $\varPhi(.,.,t) : \overline{\Omega}\times\overline{\Omega} \longrightarrow \R$ is a measurable function.
\end{itemize}
Also, we take $ \widehat{a}_x(t):=\widehat{a}(x,t)=a_{(x,x)}(t)  ~~ \forall~ (x,t)\in \overline{\Omega}\times \R$. Then the function $\widehat{\varphi}(.,.) : \overline{\Omega}\times \R \longrightarrow \R$ defined  by : 
  $$
     \widehat{\varphi}_{x}(t):=\widehat{\varphi}(x,t)= \left\{ 
          \begin{array}{clclc}
        \widehat{a}(x,|t|)t   & \text{ for }& t\neq 0, \\\\
          0  & \text{ for } & t=0,
          \end{array}
          \right. 
       $$
is increasing homeomorphism from $\R$ onto itself. If we set 
\begin{equation}\label{phi}
\widehat{\varPhi}_{x}(t):=\widehat{\varPhi}(x,t)=\int_{0}^{t}\widehat{\varphi}_{x}(\tau)d\tau ~~\text{ for all}~~ t\geqslant 0.
\end{equation}  
Then, $\widehat{\varPhi}_{x}$ is a Musielak function.\\

The results of this work present a generalization to several situations. Note that, when we take $a_{x,y}(t)=|t|^{p(x,y)-2}$ where $p:\overline{\Omega}\times\overline{\Omega}\longrightarrow(1,+\infty)$ is a continuous bounded function, then our nonlocal operator $(-\Delta)^{s(x,.)}_{a_{(x,.)}}$  which can be seen as
a generalization of the nonlocal  $s(.,.)$-order  operator with variable exponent $(-\Delta)^{s(x,.)}_{p(x,.)}$ (see \cite{s1}) defined as
$$(-\Delta)_{p(x,.)}^{s(x,.)}u(x)=2\lim\limits_{\varepsilon\searrow 0} \int_{\R^N\setminus B_\varepsilon(x)}\frac{|u(x)-u(y)|^{p(x,y)-2}(u(x)-u(y))}{|x-y|^{N+s(x,y)p(x,y)}}~dy~~~~~~$$
 for all $x\in \R^N$, (see also   \cite{bah4,s2.,s2}.\\
 
Moreover, this work brings us back to introduce the  $s(.,.)$-fractional $a$-Laplacian $(-\Delta)_{a}^{s(x,.)}$ if $a_{x,y}(t) = a(t)$, i.e. the function $a$ is independent of variables $x, y$. Then, we obtain the following nonlocal non-local  $s(.,.)$-order operator $(-\Delta)_{a}^{s(x,.)}$, defined as
 {\small  $$
               \begin{aligned}
               (-\Delta)^{s(x,.)}_{a}u(x)=2\lim\limits_{\varepsilon\searrow 0} \int_{\R^N\setminus B_\varepsilon(x)} a\left( \dfrac{|u(x)-u(y)|}{|x-y|^{s(x,y)} }\right)\dfrac{u(x)-u(y)}{|x-y|^{s(x,y)}} \dfrac{dy}{|x-y|^{N+s(x,y)} }
               \end{aligned}
                $$} 
 for all $x\in \R^N$.
 
 \begin{center}
   	\def\thirdellipse{(-1.5,.6) ellipse (2.5 and 1.5)}
   	\def\fourthellipse{(1.5,.6) ellipse (2.5 and 1.5)}
   	\def\fifthellipse{(0,0.5) ellipse (4.5 and 2.5)}
   	\begin{tikzpicture}  	
   	\draw\thirdellipse;
   	\draw\fourthellipse;
   	\draw\fifthellipse;
   	\node at (0,0.5) {$(-\Delta)_{p}^{s}$};
   	\node at (-2.5,0.5) {$(-\Delta)_{p(x,.)}^{s(x,.)}$};
   	\node at (2.5, 0.5) {$(-\Delta)_{a}^{s(x,.)}$};
   	\node at (0, 2.5) {$(-\Delta)_{a_{(x,.)}}^{s(x,.)}$};
   	\end{tikzpicture}
    \end{center}

Musielak spaces and variable exponent Lebesgue spaces share similarities with respect to the modulation of norms based on weights or exponents that can vary locally. However, the specific innovation of Musielak spaces lies in their ability to allow local weighting by a weight function, adapted to deal with singularities or specific characteristics of a function. This ability is not as direct in Lebesgue spaces with variable exponent.\\

       This paper is organized as follows, In Section \ref{S1}, we  set  the problem  \hyperref[P]{$(\mathcal{P}_{a})$}. Moreover,  we are introduced the new   nonlocal  $s(.,.)$-order operator $(-\Delta)_{a(x,.)}^{s(x,.)}$.  The Section \ref{S2}, is devoted to recall
   some properties of fractional Musielak-Sobolev spaces. In section \ref{S3}, we introduce the $s(.,.)$-fractional Musielak-Sobolev spaces and we establish some qualitative properties of these new spaces. In section \ref{S4},    by means of Ekeland's variational principle,
       we obtain the existence  of $\lambda_*>0$ such that for any $\lambda\in(0,\lambda_*)$, is an eigenvalue for the following  problem  \hyperref[P]{$(\mathcal{P}_{a})$}.   In Section \ref{S5}, we present some examples which illustrate our  results. 
        \section{Preliminaries results}\label{S2}                      
To deal with this situation we define the fractional Musielak-Sobolev space to
investigate Problem \hyperref[P]{$(\mathcal{P}_{a})$}. Let us recall the definitions and some elementary
properties of this spaces. We refer the reader to \cite{benkirane,benkirane2} for further reference
and for some of the proofs of the results in this section.

 For the function $\widehat{\varPhi}_x$ given in (\ref{phi}), we introduce the Musielak space as follows
  $$L_{\widehat{\varPhi}_x} (\Omega)=\left\lbrace u : \Omega \longrightarrow \R \text{ mesurable }: \int_\Omega\widehat{\varPhi}_x(\lambda |u(x)|)dx < \infty \text{ for some } \lambda>0 \right\rbrace. $$
The space $L_{\widehat{\varPhi}_x} (\Omega)$ is a Banach space endowed with the Luxemburg norm 
$$||u||_{\widehat{\varPhi}_x}=\inf\left\lbrace \lambda>0 \text{ : }\int_\Omega\widehat{\varPhi}_x\left( \dfrac{|u(x)|}{\lambda}\right) dx\leqslant 1\right\rbrace. $$
 The conjugate function of $\varPhi_{x,y}$ is defined by $\overline{\varPhi}_{x,y}(t)=\int_{0}^{t}\overline{\varphi}_{x,y}(\tau)d\tau~~\text{ for all } (x,y)\in\overline{\Omega}\times\overline{\Omega},~~ \text{ and all } t\geqslant 0$, where $\overline{\varphi}_{x,y} : \R\longrightarrow \R$ is given by $\overline{\varphi}_{x,y}(t):=\overline{\varphi}(x,y,t)=\sup\left\lbrace s \text{ : } \varphi(x,y,s)\leqslant t\right\rbrace.$     Throughout this paper, we assume that there exist two positive constants $\varphi^+$ and $\varphi^-$ such that 
\begin{equation}\label{v1}\tag{$\varPhi_1$}
    1<\varphi^-\leqslant\dfrac{t\varphi_{x,y}(t)}{\varPhi_{x,y}(t)}\leqslant \varphi^+<+\infty \text{ for all } (x,y)\in\overline{\Omega}\times\overline{\Omega}~~\text{ and all } t\geqslant 0. \end{equation}
    This relation implies  that
    \begin{equation}\label{A2}
        1<\varphi^-\leqslant \dfrac{t\widehat{\varphi}_{x}(t)}{\widehat{\varPhi}_{x}(t)}\leqslant\varphi^+<+\infty\text{ for all } x\in\overline{\Omega}~~\text{ and all } t\geqslant 0.\end{equation}
             It follows that  $\varPhi_{x,y}$ and $\widehat{\varPhi}_{x}$ satisfy the global $\Delta_2$-condition (see \cite{ra}), written $\varPhi_{x,y}\in \Delta_2$ and $\widehat{\varPhi}_{x}\in \Delta_2$, that is,
    \begin{equation}\label{r1}
    \varPhi_{x,y}(2t)\leqslant K_1\varPhi_{x,y}(t)~~ \text{ for all } (x,y)\in\overline{\Omega}\times\overline{\Omega}~~\text{ and  all } t\geqslant 0,
    \end{equation} and
    \begin{equation}\label{rr1}
        \widehat{\varPhi}_{x}(2t)\leqslant K_2\widehat{\varPhi}_{x}(t) ~~\text{ for any } x\in\overline{\Omega}~~\text{ and  all } t\geqslant 0,
        \end{equation}
 where $K_1$ and $K_2$ are two positive constants. 
 
 Furthermore, we assume that $\varPhi_{x,y}$ satisfies the following condition
  \begin{equation}\label{f2.}\tag{$\varPhi_2$}
  \text{ the function } [0, \infty) \ni t\mapsto \varPhi_{x,y}(\sqrt{t}) \text{ is convex. }
  \end{equation}
 
    \begin{defini}
     Let $A_x(t)$, $B_x(t): \R^+\times \Omega\longrightarrow \R^+$ be two Musielak functions. 
            $A_x$ is stronger $($resp essentially stronger$)$ than $B_x$,  $A_x\succ B_x$ (resp $A_x\succ\succ B_x$) in symbols, if for almost every $x\in \overline{\Omega}$ 
          $$B(x,t)\leqslant A( x,a t),~~ t\geqslant t_0\geqslant 0$$
          for some $($resp for each$)$ $a>0$ and $t_0$ (depending on $a$).
    \end{defini}
     \begin{rem}[{\cite[Section 8.5]{1}}]
          $A_x\succ\succ B_x$  is equivalent to the condition \\
                         $$\lim_{t\rightarrow \infty}\left(\sup\limits_{x\in \overline{\Omega}}\dfrac{B( x,\lambda t)}{A(x,t)}\right) =0$$
                         for all $\lambda>0$. 
                            \end{rem}
         
   Now,  we  define the fractional Musielak-Sobolev space as introduce in \cite{benkirane} as follows 
    \begingroup\makeatletter\def\f@size{9}\check@mathfonts$$ W^s{L_{\varPhi_{x,y}}}(\Omega)=\Bigg\{u\in L_{\widehat{\varPhi}_x}(\Omega) :  \int_{\Omega} \int_{\Omega} \varPhi_{x,y}\left( \dfrac{\lambda| u(x)- u(y)|}{|x-y|^s}\right) \dfrac{dxdy}{|x-y|^N}< \infty \text{ for some } \lambda >0 \Bigg\}.
$$\endgroup
This space can be equipped with the norm
\begin{equation}\label{r2}
||u||_{s,\varPhi_{x,y}}=||u||_{\widehat{\varPhi}_x}+[u]_{s,\varPhi_{x,y}},
\end{equation}
where $[.]_{s,\varPhi_{x,y}}$ is the Gagliardo seminorm defined by 
$$[u]_{s,\varPhi_{x,y}}=\inf \Bigg\{\lambda >0 :  \int_{\Omega} \int_{\Omega} \varPhi_{x,y}\left( \dfrac{|u(x)- u(y)|}{\lambda|x-y|^s}\right) \dfrac{dxdy}{|x-y|^N}\leqslant 1 \Bigg\}.
$$

\begin{thm}$($\cite{benkirane}$)$.
       Let $\Omega$ be an open subset of $\R^N$, and let $s\in (0,1)$. The space $W^sL_{\varPhi_{x,y}}(\Omega)$ is a Banach space with respect to the norm $(\ref{r2})$, and a  separable $($resp. reflexive$)$ space if and only if $\varPhi_{x,y} \in \Delta_2$ $($resp. $\varPhi_{x,y}\in \Delta_2 $ and $\overline{\varPhi}_{x,y}\in \Delta_2$$)$. Furthermore,
              if   $\varPhi_{x,y} \in \Delta_2$ and $\varPhi_{x,y}(\sqrt{t})$ is convex, then  the space $W^sL_{\varPhi_{x,y}}(\Omega)$ is an uniformly convex space.\end{thm}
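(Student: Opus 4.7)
My plan is to embed $W^{s}L_{\varPhi_{x,y}}(\Omega)$ isometrically as a closed subspace of a product of two Musielak--Orlicz spaces, and then transfer the desired topological properties from that product, where they are classical. Endow $\Omega\times\Omega$ with the $\sigma$-finite measure $d\mu(x,y)=\frac{dx\,dy}{|x-y|^{N}}$, consider the Musielak--Orlicz space $L_{\varPhi_{x,y}}(\Omega\times\Omega,d\mu)$, and define
$$T:W^{s}L_{\varPhi_{x,y}}(\Omega)\longrightarrow L_{\widehat{\varPhi}_x}(\Omega)\times L_{\varPhi_{x,y}}(\Omega\times\Omega,d\mu),\qquad Tu=(u,D_{s}u),$$
where $D_{s}u(x,y)=\frac{u(x)-u(y)}{|x-y|^{s}}$. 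Giving the product the sum norm, the very definition of the norm in \eqref{r2} makes $T$ a linear isometry onto its image.

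Next I would show that $T$ has closed range, which delivers completeness and sets the stage for the remaining properties. Given a Cauchy sequence $(u_n)\subset W^{s}L_{\varPhi_{x,y}}(\Omega)$, the pair $(u_n,D_{s}u_n)$ is Cauchy in the product. By completeness of each Musielak--Orlicz factor, $u_n\to u$ in $L_{\widehat{\varPhi}_x}(\Omega)$ and $D_{s}u_n\to v$ in $L_{\varPhi_{x,y}}(\Omega\times\Omega,d\mu)$. Passing to a subsequence that converges $\mu$-a.e.\ (Musielak--Orlicz norm convergence implies convergence in measure on sets of finite measure), one identifies $v(x,y)=\frac{u(x)-u(y)}{|x-y|^{s}}$ pointwise a.e.\ off the diagonal. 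Hence $u\in W^{s}L_{\varPhi_{x,y}}(\Omega)$ with $Tu=(u,v)$, so the range of $T$ is closed and $W^{s}L_{\varPhi_{x,y}}(\Omega)$ is Banach.

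For the remaining claims I would invoke classical Musielak--Orlicz theory (cf.\ \cite{mu}): $L_{\varPhi_{x,y}}(\Omega\times\Omega,d\mu)$ is separable iff $\varPhi_{x,y}\in\Delta_2$; reflexive iff both $\varPhi_{x,y}$ and $\overline{\varPhi}_{x,y}$ belong to $\Delta_2$; and uniformly convex (with respect to the Luxemburg norm associated to the modular, equivalent to the sum norm) whenever $\varPhi_{x,y}\in\Delta_2$ and $t\mapsto\varPhi_{x,y}(\sqrt{t})$ is convex. The same statements hold for $L_{\widehat{\varPhi}_x}(\Omega)$, where the diagonal restriction $\widehat{\varPhi}_x(t)=\varPhi(x,x,t)$ automatically inherits $\Delta_2$ and convexity of $t\mapsto\widehat{\varPhi}_x(\sqrt{t})$ from $\varPhi_{x,y}$. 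Since each of these three properties passes to finite products (after reindexing by an equivalent product norm in the uniformly convex case) and descends to closed subspaces, the isometry $T$ yields the sufficient directions. For the converses in the separable and reflexive cases, restricting to functions of the form $u=\eta f$ with $\eta\in C_c^{\infty}(\Omega)$ a bump and $f\in L_{\widehat{\varPhi}_x}$ embeds a copy of $L_{\widehat{\varPhi}_x}(\omega)$ into $W^{s}L_{\varPhi_{x,y}}(\Omega)$ for small $\omega\Subset\Omega$, and a similar argument for $D_{s}u$ forces the corresponding $\Delta_2$ conditions whenever $W^{s}L_{\varPhi_{x,y}}(\Omega)$ possesses the property.

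The most delicate point I expect is the closedness step: extracting a $\mu$-a.e.\ convergent subsequence of $(D_{s}u_n)$ from Musielak--Orlicz norm convergence and then rigorously identifying the limit with the fractional difference quotient of $u$ requires the modular-to-convergence-in-measure bridge on the singular measure space $(\Omega\times\Omega,d\mu)$. Once this is in hand, the identification $v=D_{s}u$ is algebraic on almost every fiber, and the remainder of the argument reduces to standard Musielak--Orlicz space facts combined with the persistence of separability, reflexivity, and uniform convexity under the isometric embedding $T$.
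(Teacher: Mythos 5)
Your embedding $Tu=(u,D_su)$ with $D_su(x,y)=\frac{u(x)-u(y)}{|x-y|^{s}}$ into $L_{\widehat{\varPhi}_x}(\Omega)\times L_{\varPhi_{x,y}}\bigl(\Omega\times\Omega,\frac{dxdy}{|x-y|^N}\bigr)$ is the standard route (the paper itself does not reprove this theorem; it quotes \cite{benkirane}), and the completeness, separability and reflexivity parts of your argument are sound: the range of $T$ is closed by the a.e.-subsequence identification you describe, and closed subspaces of separable (resp. reflexive) products inherit these properties. The genuine gap is the uniform convexity step. Uniform convexity is a property of a specific norm, not of an isomorphism class, and it is destroyed by equivalent renorming. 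The only product norm for which $T$ is an isometry relative to $(\ref{r2})$ is the sum ($\ell^1$-type) norm, and a sum of two uniformly convex norms is not uniformly convex (already $\R^2$ with the $\ell^1$ norm fails); your parenthetical fix, ``reindexing by an equivalent product norm,'' breaks the isometry, so at best you conclude that $W^sL_{\varPhi_{x,y}}(\Omega)$ is \emph{isomorphic} to a uniformly convex space, not that the norm in question is uniformly convex. The accepted way to obtain uniform convexity here is a direct modular argument: convexity of $t\mapsto\varPhi_{x,y}(\sqrt{t})$ yields the Clarkson--Lamperti inequality $\varPhi_{x,y}\left(\frac{|a+b|}{2}\right)+\varPhi_{x,y}\left(\frac{|a-b|}{2}\right)\leqslant \frac{1}{2}\varPhi_{x,y}(|a|)+\frac{1}{2}\varPhi_{x,y}(|b|)$ (exactly the inequality this paper invokes from \cite{Lam} to get $(\ref{an34})$), which gives uniform convexity of the modular $\varPsi$ in $(\ref{modN})$, and the $\Delta_2$-condition then transfers this to the associated norm.

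A second gap is the necessity (``only if'') direction for separability and reflexivity. The copy of $L_{\widehat{\varPhi}_x}(\omega)$ you propose to embed via $u=\eta f$ with a bump $\eta$ does not exist: for a generic $f\in L_{\widehat{\varPhi}_x}(\omega)$ the Gagliardo modular $\int_{\Omega}\int_{\Omega}\varPhi_{x,y}\left(\frac{\lambda|u(x)-u(y)|}{|x-y|^{s}}\right)\frac{dxdy}{|x-y|^{N}}$ is infinite, since membership in the fractional space is a genuine smoothness restriction, so multiplication by a smooth cutoff does not map $L_{\widehat{\varPhi}_x}$ into $W^sL_{\varPhi_{x,y}}(\Omega)$. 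To deduce $\Delta_2$ (resp. $\Delta_2$ for $\overline{\varPhi}_{x,y}$) from separability (resp. reflexivity) one must work harder, e.g. when $\Delta_2$ fails, construct suitably scaled functions supported on disjoint balls that realize a copy of a nonseparable or nonreflexive sequence space inside $W^sL_{\varPhi_{x,y}}(\Omega)$; as written, this direction is not established.
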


           \begin{defini}$($\cite{benkirane}$)$.
           We say that $\varPhi_{x,y}$ satisfies the fractional boundedness condition, written $\varPhi_{x,y}\in \mathcal{B}_{f}$, if
         \begin{equation}\tag{$\varPhi_3$}
        \label{v3}         
           \sup\limits_{(x,y)\in \overline{\Omega}\times\overline{\Omega}}\varPhi_{x,y}(1)<\infty.  \end{equation}
           \end{defini}
           \begin{thm}  $($\cite{benkirane}$)$.    \label{TT}
                       Let $\Omega$ be an open subset of $\R^N$,  and  $0<s<1$. Assume that  $\varPhi_{x,y}\in \mathcal{B}_{f}$. 
                       Then,
                       $$C^2_0(\Omega)\subset W^sL_{\varPhi_{x,y}}(\Omega).$$
                  \end{thm}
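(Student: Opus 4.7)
The goal is to show that if $u \in C^2_0(\Omega)$ then $u \in L_{\widehat{\varPhi}_x}(\Omega)$ and that the Gagliardo-type double integral in the definition of $W^sL_{\varPhi_{x,y}}(\Omega)$ is finite for some $\lambda>0$. Fix $u \in C^2_0(\Omega)$, set $K := \mathrm{supp}(u)\subset\subset \Omega$, $M := \|u\|_\infty$, and let $L$ be a Lipschitz constant for $u$, so that $|u(x)-u(y)| \leq L|x-y|$ and $|u(x)-u(y)| \leq 2M$ for all $x,y \in \R^N$.

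First, for the modular part: on $K$ we have $|u|\leq M$, and outside $K$ the integrand vanishes. By assumption $(\ref{v3})$, $\widehat{\varPhi}_x(1) = \varPhi_{x,x}(1) \leq \sup_{(x,y)\in\overline{\Omega}\times\overline{\Omega}}\varPhi_{x,y}(1) =: C_1 < \infty$. Choosing $\lambda_0>0$ with $\lambda_0 M \leq 1$, condition $(\ref{A2})$ yields $\widehat{\varPhi}_x(\lambda_0|u(x)|) \leq (\lambda_0 M)^{\varphi^-}\widehat{\varPhi}_x(1) \leq C_1(\lambda_0 M)^{\varphi^-}$ uniformly in $x$, so $\int_\Omega \widehat{\varPhi}_x(\lambda_0|u|)\,dx \leq C_1(\lambda_0 M)^{\varphi^-}|K| < \infty$, proving $u\in L_{\widehat{\varPhi}_x}(\Omega)$.

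For the Gagliardo piece, the central observation is that $(\varPhi_1)$ integrated on the logarithmic scale gives
\begin{equation*}
\varPhi_{x,y}(t) \leq t^{\varphi^-}\varPhi_{x,y}(1) \leq C_1 t^{\varphi^-} \qquad (0\leq t \leq 1),
\end{equation*}
uniformly in $(x,y)$. I would then choose $\lambda>0$ so small that $\lambda \max(L,2M)\leq 1$; combined with $|u(x)-u(y)|\leq \min(L|x-y|,2M)$ this guarantees $\tfrac{\lambda|u(x)-u(y)|}{|x-y|^s}\leq 1$ everywhere. Since the integrand is supported in $\{(x,y): x\in K\text{ or }y\in K\}$, by symmetry it suffices to bound $\int_K\int_\Omega$, splitting into $A = \{|x-y|\leq 1\}$ and $A^c = \{|x-y|>1\}$.

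On $A$, use $|u(x)-u(y)|\leq L|x-y|$ to get
\begin{equation*}
\varPhi_{x,y}\!\left(\tfrac{\lambda|u(x)-u(y)|}{|x-y|^s}\right) \leq C_1(\lambda L)^{\varphi^-}|x-y|^{(1-s)\varphi^-},
\end{equation*}
so the contribution is at most $C\int_K\!\int_0^1 r^{(1-s)\varphi^- -1}dr\,dx$, finite because $(1-s)\varphi^->0$. On $A^c$, use $|u(x)-u(y)|\leq 2M$ to get the bound $C_1(2\lambda M)^{\varphi^-}|x-y|^{-s\varphi^-}$, giving a contribution controlled by $C|K|\int_1^\infty r^{-s\varphi^- -1}dr <\infty$ since $s\varphi^->0$. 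Adding everything yields the required finiteness.

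The only substantive point is the passage from $(\varPhi_1)$ to the power-type estimate $\varPhi_{x,y}(t)\leq C_1 t^{\varphi^-}$ for $t\leq 1$; once this is in place and the cut-off $|x-y|=1$ is chosen to align the Lipschitz and $L^\infty$ bounds with the integrable singularity/tail exponents, the argument is a direct integration. The hypothesis $\varPhi_{x,y}\in\mathcal{B}_f$ is used precisely to make all these estimates uniform in $(x,y)$.
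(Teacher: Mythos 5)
Your proof is correct. The theorem is quoted from \cite{benkirane} and the present paper does not reproduce the proof, so there is no in-text argument to compare against word-for-word; however, what you wrote is the canonical argument for this type of inclusion, and it uses exactly the ingredients the hypotheses were designed to supply. Two features worth noting. First, you correctly identify the role of $\mathcal{B}_f$: the power estimate $\varPhi_{x,y}(t)\leqslant t^{\varphi^-}\varPhi_{x,y}(1)$ for $t\leqslant 1$ follows from inequality (\ref{r11}) alone, and it is the uniform bound $C_1=\sup_{(x,y)}\varPhi_{x,y}(1)<\infty$ from $\mathcal{B}_f$ that lets you pull a single constant out of the double integral. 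Second, the splitting at $|x-y|=1$, paired with the small-$\lambda$ normalization $\lambda\max(L,2M)\leqslant 1$, is exactly what makes the argument of $\varPhi_{x,y}$ land in $[0,1]$ on both regions so the power estimate is applicable; the resulting exponents $(1-s)\varphi^-$ and $-s\varphi^-$ are then precisely what make the radial integrals $\int_0^1 r^{(1-s)\varphi^--1}\,dr$ and $\int_1^\infty r^{-s\varphi^--1}\,dr$ converge, since $0<s<1$ and $\varphi^->1$. The identification $\widehat{\varPhi}_x(1)=\varPhi_{x,x}(1)\leqslant C_1$ in the $L_{\widehat{\varPhi}_x}$ step is also justified by the construction of $\widehat{\varphi}_x$ from $\varphi_{x,x}$. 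No gaps.
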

                  
            \begin{lem}$($\cite{benkirane}$)$ $\label{2.2..}$ Assume that \hyperref[v1]{$(\varPhi_1)$} is satisfied. Then the following inequalities hold true:
                              \begin{equation}\label{3.}
            \varPhi_{x,y}(\sigma t)\geqslant \sigma^{\varphi^-}\varPhi_{x,y}(t) ~~\text{ for all } t>0  \text{ and any  } \sigma>1,
                              \end{equation}
                 \begin{equation}\label{3.2}
            \varPhi_{x,y}(\sigma t)\geqslant \sigma^{\varphi^+}\varPhi_{x,y}(t) ~~\text{ for all }  t>0  \text{ and any } \sigma\in (0,1),
                              \end{equation}
             \begin{equation}\label{r10}
                                 \varPhi_{x,y}(\sigma t)\leqslant \sigma^{\varphi^+}\varPhi_{x,y}(t) ~~\text{ for all } t>0 \text{ and any } \sigma>1,
                                 \end{equation} 
                \begin{equation}\label{r11}
                                \varPhi_{x,y}(t)\leqslant \sigma^{\varphi^-}\varPhi_{x,y}\left( \dfrac{t}{\sigma} \right) ~~\text{ for all } t>0 \text{ and any } \sigma \in(0,1).
                                \end{equation}                                  
                              \end{lem}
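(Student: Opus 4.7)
\medskip

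\noindent\textbf{Proof plan for Lemma \ref{2.2..}.}

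The plan is to recognize that the two-sided bound in \hyperref[v1]{$(\varPhi_1)$} is exactly a two-sided control of the logarithmic derivative of $\varPhi_{x,y}$, and then to integrate. Since $\varPhi_{x,y}(t)=\int_{0}^{t}\varphi_{x,y}(\tau)\,d\tau$ with $\varphi_{x,y}$ continuous and strictly positive on $(0,\infty)$, the map $t\mapsto\varPhi_{x,y}(t)$ is $C^{1}$ on $(0,\infty)$, strictly positive, and
$$
\frac{d}{dt}\log\varPhi_{x,y}(t)=\frac{\varphi_{x,y}(t)}{\varPhi_{x,y}(t)}.
$$
Hence \hyperref[v1]{$(\varPhi_1)$} rewrites as
$$
\frac{\varphi^{-}}{t}\;\leqslant\;\frac{d}{dt}\log\varPhi_{x,y}(t)\;\leqslant\;\frac{\varphi^{+}}{t}\qquad\text{for all }t>0,
$$
uniformly in $(x,y)\in\overline{\Omega}\times\overline{\Omega}$.

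Step 1. Fix $t>0$ and $\sigma>1$. Integrate the double inequality above from $t$ to $\sigma t$:
$$
\varphi^{-}\log\sigma\;\leqslant\;\log\varPhi_{x,y}(\sigma t)-\log\varPhi_{x,y}(t)\;\leqslant\;\varphi^{+}\log\sigma,
$$
and exponentiate to obtain
$$
\sigma^{\varphi^{-}}\varPhi_{x,y}(t)\;\leqslant\;\varPhi_{x,y}(\sigma t)\;\leqslant\;\sigma^{\varphi^{+}}\varPhi_{x,y}(t),
$$
which is exactly (\ref{3.}) and (\ref{r10}).

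Step 2. For $\sigma\in(0,1)$ and $t>0$, integrate the same double inequality from $\sigma t$ to $t$ (note the positive length $\log(1/\sigma)$):
$$
\varphi^{-}\log\tfrac{1}{\sigma}\;\leqslant\;\log\varPhi_{x,y}(t)-\log\varPhi_{x,y}(\sigma t)\;\leqslant\;\varphi^{+}\log\tfrac{1}{\sigma},
$$
which after exponentiation yields
$$
\sigma^{\varphi^{+}}\varPhi_{x,y}(t)\;\leqslant\;\varPhi_{x,y}(\sigma t)\;\leqslant\;\sigma^{\varphi^{-}}\varPhi_{x,y}(t).
$$
The left-hand inequality is precisely (\ref{3.2}); the right-hand inequality, applied with $t$ replaced by $t/\sigma$ (so that $\sigma t$ becomes $t$ and $t$ becomes $t/\sigma$), gives $\varPhi_{x,y}(t)\leqslant\sigma^{\varphi^{-}}\varPhi_{x,y}(t/\sigma)$, which is (\ref{r11}).

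Step 3. I would note that no step creates any serious obstacle: once \hyperref[v1]{$(\varPhi_1)$} is read as a logarithmic derivative bound, the four inequalities are just the two integrated forms on $(t,\sigma t)$ or $(\sigma t,t)$. The only mildly delicate point is the justification that $\varPhi_{x,y}(t)>0$ for $t>0$ (so the logarithm is defined) and that $\varphi_{x,y}/\varPhi_{x,y}$ is integrable on compact subintervals of $(0,\infty)$; both facts follow from $\varPhi_{x,y}$ being a $\varPhi$-function (continuous, nondecreasing, $\varPhi_{x,y}(0)=0$, $\varPhi_{x,y}(t)>0$ for $t>0$) together with continuity of $\varphi_{x,y}$. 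The uniformity in $(x,y)$ is automatic since $\varphi^{\pm}$ are independent of $(x,y)$.
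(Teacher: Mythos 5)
Your proof is correct, and it is essentially the standard argument that the cited source uses for this kind of two-sided $\Delta_2$/$\nabla_2$-type bound: read $(\varPhi_1)$ as a two-sided control of $\frac{d}{dt}\log\varPhi_{x,y}(t)$, integrate over $[t,\sigma t]$ or $[\sigma t,t]$, and exponentiate. The substitution $t\mapsto t/\sigma$ to recover (\ref{r11}) from the right-hand inequality in your Step~2 is exactly right, and the justification of positivity of $\varPhi_{x,y}$ and the $C^1$ regularity (from $\varphi_{x,y}$ being an increasing homeomorphism fixing $0$) closes the only delicate point.
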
        
       
   
                     For any $u \in W^sL_{\varPhi_{x,y}}(\Omega)$, we define the modular function on  $W^sL_{\varPhi_{x,y}}(\Omega)$  as follows  
                    \begin{equation}\label{modN}
        \varPsi(u)=\displaystyle\int_{\Omega} \int_{\Omega} \varPhi_{x,y}\left( \dfrac{ |u(x)- u(y)|}{|x-y|^s}\right) \dfrac{dxdy}{|x-y|^N}+\int_{\Omega}\widehat{\varPhi}_{x}\left( |u(x)|\right) dx. \end{equation}                           
        \begin{prop}$($\cite{benkirane}$)$.\label{mod}
         Assume that (\ref{v1}) is satisfied. Then for any $u \in W^sL_{\varPhi_{x,y}}(\Omega)$, the following relations hold true:
           \begin{equation}\label{mod1}
     ||u||_{s,\varPhi_{x,y}}>1\Longrightarrow      ||u||_{s,\varPhi_{x,y}}^{\varphi^-} \leqslant  \varPsi(u)\leqslant  ||u||_{s,\varPhi_{x,y}}^{\varphi^+},
           \end{equation}
           \begin{equation}\label{mod2}
                ||u||_{s,\varPhi_{x,y}}<1\Longrightarrow    ||u||_{s,\varPhi_{x,y}}^{\varphi^+} \leqslant  \varPsi(u)\leqslant  ||u||_{s,\varPhi_{x,y}}^{\varphi^-}. \end{equation}
           \end{prop}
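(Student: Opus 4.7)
The plan is to apply the scaling estimates of Lemma~\ref{2.2..} with scaling factor equal to $\lambda := \|u\|_{s,\varPhi_{x,y}}$, noting that by (\ref{A2}) the analogues of those estimates hold for $\widehat{\varPhi}_x$ exactly as they do for $\varPhi_{x,y}$.

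First I would establish the normalization identity $\varPsi(u/\lambda)=1$ for every $u\neq 0$. Here the global $\Delta_2$-conditions (\ref{r1})--(\ref{rr1}) are essential: they make $\varPsi$ continuous under positive scaling and thereby force the Luxemburg-type infimum underlying (\ref{r2}) to be attained, rather than merely satisfied as an inequality.

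Next I would rewrite $\varPsi(u)=\int_\Omega\!\int_\Omega \varPhi_{x,y}\!\left(\lambda\cdot\tfrac{|u(x)-u(y)|}{\lambda|x-y|^s}\right)\tfrac{dx\,dy}{|x-y|^N}+\int_\Omega \widehat{\varPhi}_x\!\left(\lambda\cdot\tfrac{|u(x)|}{\lambda}\right)dx$ and apply Lemma~\ref{2.2..} pointwise with $\sigma=\lambda$. For $\lambda>1$, inequalities (\ref{3.}) and (\ref{r10}) yield
$$\lambda^{\varphi^-}\,\varPsi(u/\lambda)\le \varPsi(u)\le \lambda^{\varphi^+}\,\varPsi(u/\lambda),$$
while for $\lambda<1$, inequalities (\ref{3.2}) and (\ref{r11}) (the latter rewritten, via the substitution $t\mapsto \sigma t$, as $\varPhi_{x,y}(\sigma t)\le \sigma^{\varphi^-}\varPhi_{x,y}(t)$ for $\sigma\in(0,1)$) give
$$\lambda^{\varphi^+}\,\varPsi(u/\lambda)\le \varPsi(u)\le \lambda^{\varphi^-}\,\varPsi(u/\lambda).$$
Substituting $\varPsi(u/\lambda)=1$ from the first step immediately produces both conclusions (\ref{mod1}) and (\ref{mod2}).

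The main obstacle is the first step. Because the norm in (\ref{r2}) is presented as a sum of two Luxemburg-type seminorms rather than as a single combined Luxemburg functional, the identity $\varPsi(u/\lambda)=1$ is not a tautological consequence of any single Luxemburg definition; it must be extracted from the combined modular structure together with the continuity supplied by the two $\Delta_2$-conditions. Once that normalization is settled, the integration of the pointwise scaling estimates from Lemma~\ref{2.2..} is essentially mechanical, and the dichotomy $\lambda>1$ versus $\lambda<1$ naturally produces the two asymmetric bounds of the statement.
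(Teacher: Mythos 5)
The central step of your proposal, the ``normalization identity'' $\varPsi(u/\lambda)=1$ with $\lambda=\|u\|_{s,\varPhi_{x,y}}$, is false, and this breaks the proof. The point is precisely the one you flag as ``the main obstacle'': the norm in $(\ref{r2})$ is the \emph{sum} of two separate Luxemburg quantities, $\|u\|_{\widehat{\varPhi}_x}$ and $[u]_{s,\varPhi_{x,y}}$, each attached to its own modular, and is not the Luxemburg norm of the combined modular $\varPsi$. The $\Delta_2$-conditions give you continuity of each modular under scaling, hence $\rho\bigl(u/\|u\|_{\widehat{\varPhi}_x}\bigr)=1$ and $\phi\bigl(u/[u]_{s,\varPhi_{x,y}}\bigr)=1$ where $\rho,\phi$ are the two integral modulars; but dividing instead by the strictly larger number $\lambda=\|u\|_{\widehat{\varPhi}_x}+[u]_{s,\varPhi_{x,y}}$ only gives, via convexity of $\varPhi_{x,y}$ and $\widehat{\varPhi}_x$,
\[
\varPsi\!\left(\frac{u}{\lambda}\right)\;=\;\phi\!\left(\frac{[u]_{s,\varPhi_{x,y}}}{\lambda}\cdot\frac{u}{[u]_{s,\varPhi_{x,y}}}\right)+\rho\!\left(\frac{\|u\|_{\widehat{\varPhi}_x}}{\lambda}\cdot\frac{u}{\|u\|_{\widehat{\varPhi}_x}}\right)\;\leqslant\;\frac{[u]_{s,\varPhi_{x,y}}}{\lambda}+\frac{\|u\|_{\widehat{\varPhi}_x}}{\lambda}\;=\;1,
\]
an \emph{inequality}, and generically a strict one. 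For instance with $\varPhi_{x,y}(t)=t^p$ ($p>1$) and a $u$ with $\|u\|_{\widehat{\varPhi}_x}=[u]_{s,\varPhi_{x,y}}=1$ one gets $\varPsi(u/\lambda)=2/2^p<1$. Since your derivation of the lower bounds reads $\varPsi(u)\geqslant\lambda^{\varphi^-}\varPsi(u/\lambda)$ and then substitutes $\varPsi(u/\lambda)=1$, the lower estimates $\lambda^{\varphi^-}\leqslant\varPsi(u)$ (resp. $\lambda^{\varphi^+}\leqslant\varPsi(u)$) do not follow; only the upper bounds survive once ``$=1$'' is replaced by ``$\leqslant 1$''.

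The paper's own argument for the corresponding Proposition~\ref{smod} splits into two unequal halves. The upper bounds use exactly the inequality $\varPsi(u/\|u\|)\leqslant 1$ (not equality) after pulling the factor $\|u\|$ out of each modular with $(\ref{r10})$ or $(\ref{r11})$. The lower bounds are obtained by a separate approximation argument: one takes $\beta$ strictly between $1$ and $\|u\|$ (resp. $\xi$ strictly between $0$ and $\|u\|$), applies $(\ref{3.})$ or $(\ref{3.2})$ to factor out $\beta^{\varphi^-}$ (resp. $\xi^{\varphi^+}$), argues that the remaining combined modular exceeds $1$, and then lets $\beta\nearrow\|u\|$. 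So the structure of the argument you would need is the two-sided inequality plus a limiting step, not a single normalization identity; the identity you rely on is simply not available for the sum norm, and claiming that $\Delta_2$ ``forces the infimum to be attained'' does not rescue it, because the infimum in $(\ref{r2})$ is not the infimum over $\lambda$ of the condition $\varPsi(u/\lambda)\leqslant 1$ in the first place.
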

           
         We Define a closed linear subspace of $W^sL_{\varPhi_{x,y}}(\Omega)$ as follows
             $$W^s_0L_{\varPhi_{x,y}}(\Omega)=\left\lbrace u\in W^sL_{\varPhi_{x,y}}(\R^N) \text{ : } u=0 \text{ a.e in } \R^N\setminus \Omega \right\rbrace.$$ 
         \begin{thm} $($\cite{benkirane2}$)$ \label{pc}
                                 Let $\Omega$ be a bounded open subset of  $\R^N$ with $C^{0,1}$-regularity 
                                       and bounded boundary,  let $s\in (0,1)$.
                                  Then there exists a positive constant $\gamma$ such that    
                        $$ ||u||_{\widehat{\varPhi}_x}\leqslant \gamma [u]_{s,\varPhi_{x,y}} \text{ for all   }  u \in  W^s_0L_{\varPhi_{x,y}}(\Omega).$$
                                \end{thm}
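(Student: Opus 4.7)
The plan is to adapt the classical proof of the Poincar\'e inequality for fractional Sobolev spaces $W^{s,p}_0(\Omega)$ to the variable Musielak setting. First I would reduce the stated norm inequality to a modular inequality: by homogeneity of the Luxemburg norm and the modular--norm correspondence of Proposition \ref{mod}, together with the $\Delta_2$-conditions (\ref{r1}) and (\ref{rr1}), it is enough to exhibit a constant $\gamma=\gamma(\Omega,\varPhi)$ such that whenever the Gagliardo modular
\[
\int_\Omega\!\!\int_\Omega \varPhi_{x,y}\!\left(\frac{|u(x)-u(y)|}{|x-y|^s}\right)\frac{dx\,dy}{|x-y|^N}\leqslant 1,
\]
one has $\int_\Omega \widehat{\varPhi}_x(|u(x)|/\gamma)\,dx\leqslant 1$.

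Next I would exploit the zero trace. Because $u\equiv 0$ on $\R^N\setminus\Omega$ and $\Omega$ is bounded, I pick $R>0$ with $\Omega\subset B(0,R)$ and a ball $B_0\subset B(0,2R)\setminus\overline{\Omega}$ of positive measure (such a $B_0$ exists since $\partial\Omega$ is bounded and Lipschitz). For every $x\in\Omega$ and $y\in B_0$ we then have $|u(x)-u(y)|=|u(x)|$ and $|x-y|\leqslant D:=3R$. Starting from the trivial averaging identity $|u(x)|=\frac{1}{|B_0|}\int_{B_0}|u(x)-u(y)|\,dy$ and applying Jensen's inequality to the convex function $\widehat{\varPhi}_x(\cdot/\gamma)$ yields
\[
\widehat{\varPhi}_x\!\left(\frac{|u(x)|}{\gamma}\right)\leqslant \frac{1}{|B_0|}\int_{B_0}\widehat{\varPhi}_x\!\left(\frac{|u(x)-u(y)|}{\gamma}\right)dy.
\]

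The final move is to rewrite $|u(x)-u(y)|/\gamma=\bigl(|u(x)-u(y)|/|x-y|^s\bigr)\cdot\bigl(|x-y|^s/\gamma\bigr)$ and, choosing $\gamma\geqslant D^s$ so that $|x-y|^s/\gamma\in(0,1]$, invoke (\ref{3.2}) to bound the integrand by $(|x-y|^s/\gamma)^{\varphi^+}\varPhi_{x,y}(|u(x)-u(y)|/|x-y|^s)$, up to uniform constants furnished by (\ref{v3}). Inserting the weight $1/|x-y|^N$ (possible because $|x-y|\geqslant \mathrm{dist}(\Omega,B_0)>0$), integrating over $x\in\Omega$ and $y\in B_0$, and enlarging $\gamma$ if needed, the resulting double integral is dominated by the Gagliardo modular, which is at most $1$ by hypothesis.

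The hard part will be the passage from $\widehat{\varPhi}_x$ (which appears on the left) to $\varPhi_{x,y}$ (which appears on the right), since $\varPhi_{x,y}$ is \emph{a priori} defined only for $(x,y)\in\overline{\Omega}\times\overline{\Omega}$. One must either interpret the ambient seminorm as a genuine $\R^N\times\R^N$ Gagliardo integral of the zero-extended $u$ (so that $B_0$ may lie outside $\Omega$), or, alternatively, place $B_0$ inside $\Omega$ near $\partial\Omega$ and use the Lipschitz regularity together with a fractional Hardy-type bound to handle the boundary layer. In either scenario, the uniform bounds furnished by (\ref{v3}) and Lemma \ref{2.2..} are what allow one to compare $\widehat{\varPhi}_x$ with $\varPhi_{x,y}$ across the relevant range of $(x,y)$; this is the technically most delicate point of the argument.
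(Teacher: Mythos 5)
The paper does not supply its own proof of this statement --- it is quoted verbatim from \cite{benkirane2} --- so there is no internal argument against which to compare your proposal; I will therefore assess it on its own merits.

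The decisive obstruction in your sketch is a domain mismatch that you flag but do not resolve. The seminorm $[u]_{s,\varPhi_{x,y}}$ appearing in the statement is, by the definition given in Section~\ref{S2}, the infimum of $\lambda$ for which $\int_{\Omega}\int_{\Omega}\varPhi_{x,y}\bigl(|u(x)-u(y)|/(\lambda|x-y|^s)\bigr)\,dxdy/|x-y|^N\leqslant 1$; the double integral ranges over $\Omega\times\Omega$ only (and this is confirmed by the way Theorem~\ref{pc} is used in the proof of Theorem~\ref{spc}). Your ball $B_0$ must lie in $\R^N\setminus\overline\Omega$ for the averaging identity $|u(x)|=\tfrac{1}{|B_0|}\int_{B_0}|u(x)-u(y)|\,dy$ to hold, but then, after Jensen and the growth estimates, you arrive at a quantity of the form $\int_{\Omega}\int_{B_0}\widehat{\varPhi}_x\bigl(|u(x)-u(y)|/|x-y|^s\bigr)\,dxdy/|x-y|^N$, which lives over $\Omega\times B_0$ and hence cannot be absorbed into the Gagliardo modular over $\Omega\times\Omega$. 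If you instead put $B_0\subset\Omega$, the averaging identity fails since $u$ need not vanish there. Moreover, even granting an extension of $\varPhi_{x,y}$ off the diagonal set $\overline\Omega\times\overline\Omega$, the step replacing $\widehat{\varPhi}_x$ by $\varPhi_{x,y}$ requires a two-sided uniform comparison $\widehat\varPhi_x(t)\asymp\varPhi_{x,y}(t)$; neither condition \hyperref[v1]{$(\varPhi_1)$}, nor \hyperref[f2.]{$(\varPhi_2)$}, nor \hyperref[v3]{$(\varPhi_3)$} supplies a uniform \emph{lower} bound on $\varPhi_{x,y}(1)$, so the quantities are not comparable across $(x,y)$ with the hypotheses at hand. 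This is a missing piece, not a routine detail.

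A route that stays entirely inside the tools the paper already quotes is the following: use Theorem~\ref{th2.}, which (under \eqref{15}, \eqref{16n}) gives the continuous embedding $W^sL_{\varPhi_{x,y}}(\Omega)\hookrightarrow L_{\widehat{\varPhi}^*_{x,s}}(\Omega)$ and the compact embedding into $L_{B_x}(\Omega)$ whenever $B_x\prec\prec\widehat{\varPhi}^*_{x,s}$. Since $\widehat{\varPhi}_x\prec\prec\widehat{\varPhi}^*_{x,s}$ (the Sobolev conjugate dominates essentially), one obtains the compact embedding $W^sL_{\varPhi_{x,y}}(\Omega)\hookrightarrow\hookrightarrow L_{\widehat{\varPhi}_x}(\Omega)$; a standard normalization--contradiction (Rellich--Kondrachov type) argument on the closed subspace $W^s_0L_{\varPhi_{x,y}}(\Omega)$, using reflexivity and the weak lower semicontinuity of the convex modular $u\mapsto\int_\Omega\int_\Omega\varPhi_{x,y}(|u(x)-u(y)|/|x-y|^s)\,d\mu$ to deduce that a limit with vanishing seminorm is constant, then yields the Poincar\'e inequality without ever needing to extend $\varPhi_{x,y}$ beyond $\overline\Omega\times\overline\Omega$. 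That avoids both of the difficulties your direct approach would have to overcome.
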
       
  We denote by $\widehat{\varPhi}_{x}^{-1}$ the inverse function of $\widehat{\varPhi}_{x}$ which satisfies the following conditions:
       \begin{equation}\label{15}
       \int_{0}^{1} \dfrac{\widehat{\varPhi}_{x}^{-1}(\tau)}{\tau^{\frac{N+s}{N}}}d\tau<\infty~~ \text{ for all } x\in \overline{\Omega},
       \end{equation}
       
       \begin{equation}\label{16n}
       \int_{1}^{\infty} \dfrac{\widehat{\varPhi}_{x}^{-1}(\tau)}{\tau^{\frac{N+s}{N}}}d\tau=\infty ~~\text{ for all }x\in \overline{\Omega}.
       \end{equation}
      Note that, if $\varphi_{x,y}(t)=|t|^{p(x,y)-1}$, then (\ref{15}) holds precisely when $sp(x,y)<N$ for all $(x,y)\in \overline{\Omega}\times \overline{\Omega}$.\\
       If (\ref{16n}) is satisfied, we define the inverse  Musielak conjugate function of $\widehat{\varPhi}_x$ as follows
       \begin{equation}\label{17}
       (\widehat{\varPhi}^*_{x,s})^{-1}(t)=\int_{0}^{t}\dfrac{\widehat{\varPhi}_{x}^{-1}(\tau)}{\tau^{\frac{N+s}{N}}}d\tau.
       \end{equation}
        \begin{thm}\cite{benkirane2}\label{th2.}
      Let $\Omega$  be a bounded open
       subset of  $\R^N$ with $C^{0,1}$-regularity 
         and bounded boundary. If $(\ref{15})$ and  $(\ref{16n})$  hold, then 
      \begin{equation}\label{18}
       W^s{L_{\varPhi_{x,y}}}(\Omega)\hookrightarrow L_ {\widehat{\varPhi}^*_{x,s}}(\Omega).
      \end{equation}
     Moreover, the embedding
                \begin{equation}\label{27}
                 W^s{L_{\varPhi_{x,y}}}(\Omega)\hookrightarrow L_{B_x}(\Omega),
                \end{equation}
                is compact for all $B_x\prec\prec \widehat{\varPhi}^*_{x,s}$.
                \end{thm}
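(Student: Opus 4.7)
The plan is to treat the statement in two parts: first the continuous embedding (\ref{18}), then the compactness assertion (\ref{27}). For the continuous part, I would combine the density of $C_0^2(\Omega)$ in $W^s L_{\varPhi_{x,y}}(\Omega)$ provided by Theorem \ref{TT} with a modular estimate valid on smooth functions; for the compact part, I would argue by a Fr\'echet--Kolmogorov type criterion in Musielak spaces, exploiting the essentially stronger relation $B_x \prec\prec \widehat{\varPhi}^*_{x,s}$ to upgrade $L_{\widehat{\varPhi}_x}$-convergence to $L_{B_x}$-convergence.

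For the continuous embedding, I would follow the spirit of the Donaldson--Trudinger proof adapted to the Musielak setting. For $u \in C^2_0(\Omega)$, the goal is a pointwise bound of the form
$$\widehat{\varPhi}^*_{x,s}\bigl(c\,|u(x)|\bigr) \;\leq\; C\int_{\Omega}\varPhi_{x,y}\!\left(\frac{|u(x)-u(y)|}{|x-y|^{s(x,y)}}\right) \frac{dy}{|x-y|^N},$$
obtained by expressing $|u(x)|$ through an integral (or truncation-and-layer-cake) representation and then inserting the defining integral (\ref{17}) of $(\widehat{\varPhi}^*_{x,s})^{-1}$, together with the growth estimates (\ref{3.})--(\ref{r11}) of Lemma \ref{2.2..}. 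Integrating in $x$, exploiting the symmetry $s(x,y)=s(y,x)$ to symmetrize the double integral, and invoking the Poincar\'e inequality of Theorem \ref{pc} to absorb the $L_{\widehat{\varPhi}_x}$-contribution of the norm, I would obtain $\int_{\Omega}\widehat{\varPhi}^*_{x,s}(c|u(x)|)\,dx \leq C\,\varPsi(u)$, which by Proposition \ref{mod} converts into $\|u\|_{\widehat{\varPhi}^*_{x,s}}\leq C\|u\|_{s,\varPhi_{x,y}}$; density then extends this to the whole space.

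For the compactness, given a bounded sequence $(u_n) \subset W^s L_{\varPhi_{x,y}}(\Omega)$, the continuous embedding just proved already bounds $(u_n)$ in $L_{\widehat{\varPhi}^*_{x,s}}(\Omega)$. A Riesz--Fr\'echet--Kolmogorov argument adapted to Musielak spaces, in which the translation modulus of continuity is controlled by the Gagliardo seminorm via the $\Delta_2$-conditions (\ref{r1})--(\ref{rr1}), produces along a subsequence strong convergence in $L_{\widehat{\varPhi}_x}(\Omega)$ and hence a.e.\ in $\Omega$. The hypothesis $B_x\prec\prec \widehat{\varPhi}^*_{x,s}$, by the characterization recalled after Definition, gives $\sup_{x\in\overline{\Omega}}B_x(\lambda t)/\widehat{\varPhi}^*_{x,s}(t)\to 0$ as $t\to\infty$ for every $\lambda>0$; this yields uniform integrability of $(B_x(c|u_n|))$, so that Vitali's theorem delivers strong convergence in $L_{B_x}(\Omega)$.

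The main obstacle I foresee is the simultaneous variability of $s(x,y)$ in the Gagliardo kernel and of $\varPhi_{x,y}$ in the modular, since the classical Sobolev--Orlicz rearrangement toolbox was built for a fixed exponent. I would handle it by systematically sandwiching $s(x,y)$ between $s^-$ and $s^+$ and applying the four-sided growth estimates of Lemma \ref{2.2..} to transfer information between the corresponding scales, while conditions (\ref{15})--(\ref{16n}) ensure that $(\widehat{\varPhi}^*_{x,s})^{-1}$ is well-defined, nondegenerate, and has correct growth at $0$ and at $\infty$, which is needed both to make sense of the target Musielak norm and to close the final Vitali step.
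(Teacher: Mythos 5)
The theorem you are asked to prove is imported from \cite{benkirane2}; the present paper offers no proof of it, so there is no internal argument to compare your proposal against. Judged on its own merits, your sketch captures the right general shape of an Orlicz--Sobolev embedding proof (Donaldson--Trudinger for the continuous part, a Riesz--Fr\'echet--Kolmogorov/Vitali argument driven by the essentially-stronger relation $B_x\prec\prec\widehat{\varPhi}^*_{x,s}$ for the compact part), but several steps as written either misstate the setting or leave the load-bearing estimate unsupported.

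First, Theorem \ref{th2.} concerns $W^s L_{\varPhi_{x,y}}(\Omega)$ with a \emph{fixed} $s\in(0,1)$; your proposal repeatedly writes the kernel as $|x-y|^{s(x,y)}$ and invokes the symmetry $s(x,y)=s(y,x)$ as if the variable-order space were at issue. That is the content of Corollary \ref{s333} (obtained in the paper by sandwiching via Theorem \ref{s3.2}), not of this theorem; here $s$ carries no variability, so the ``main obstacle'' you identify at the end is not present. Second, Theorem \ref{TT} asserts only the inclusion $C^2_0(\Omega)\subset W^s L_{\varPhi_{x,y}}(\Omega)$, not that $C^2_0(\Omega)$ is \emph{dense}; your final ``density then extends this to the whole space'' is therefore unsupported by any result quoted in the paper, and a density statement for $W^s L_{\varPhi_{x,y}}$ on a bounded Lipschitz domain would itself need a proof. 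Third, the Poincar\'e inequality of Theorem \ref{pc} lives on $W^s_0 L_{\varPhi_{x,y}}(\Omega)$, whereas the target theorem is about $W^s L_{\varPhi_{x,y}}(\Omega)$ where no zero boundary data is imposed; you cannot ``absorb the $L_{\widehat{\varPhi}_x}$-contribution'' that way, and in fact you do not need to, since the $L_{\widehat{\varPhi}_x}$ term is already a summand of the $W^s$-norm.

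Finally, and most seriously, the entire continuous embedding hinges on the claimed pointwise estimate
\begin{equation*}
\widehat{\varPhi}^*_{x,s}\bigl(c\,|u(x)|\bigr)\;\leqslant\;C\int_{\Omega}\varPhi_{x,y}\!\left(\frac{|u(x)-u(y)|}{|x-y|^{s}}\right)\frac{dy}{|x-y|^{N}},
\end{equation*}
and this is asserted, not proved. The Donaldson--Trudinger mechanism does not yield a raw pointwise bound of this form; it proceeds via a rearrangement or layer-cake reduction combined with a Hardy-type inequality, and translating that chain of inequalities to the Musielak setting -- where both the Young function and, via $\widehat{\varPhi}_x$, the target exponent depend on $x$ -- is precisely the nontrivial content of \cite{benkirane2}. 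Conditions (\ref{15})--(\ref{16n}) must enter there to guarantee that $(\widehat{\varPhi}^*_{x,s})^{-1}$ is finite, increasing to $\infty$, and uniformly nondegenerate in $x$; your sketch mentions them only at the end as an afterthought. As it stands, what you have written is a plausible table of contents for a proof, not a proof: the key modular inequality, the density of smooth functions, and the uniformity in $x$ of the Young-function manipulations all remain open.
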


   Next, we recall some useful properties of variable exponent spaces. For more details we refer the reader to \cite{s23,s27}, and the references therein.\\ 
     Consider the set
      $$C_+(\overline{\Omega})=\left\lbrace q\in C(\overline{\Omega}): q(x)>1 \text{ for all } x \in\overline{\Omega}\right\rbrace .$$
      For all $q\in C_+(\overline{\Omega}) $, we define $$q^{+}= \underset{x\in \overline{\Omega}}{\sup}~q(x) \quad\text{and}\quad q^{-}= \underset{x \in \overline{\Omega}}{\inf}~q(x).$$
   For any  $q\in C_+(\overline{\Omega}) $, we define the variable exponent Lebesgue space as $$L^{q(x)}(\Omega)=\bigg\{u:\Omega\longrightarrow \mathbb{R} ~~\text{measurable}: \int_{\Omega}|u(x)|^{q(x)}dx<+\infty
   \bigg\}.$$
   This vector space endowed with the \textit{Luxemburg norm}, which is defined by
   $$\|u\|_{L^{q(x)}(\Omega)}= \inf \bigg\{\lambda>0:\int_{\Omega}\bigg|\frac{u(x)}{\lambda}\bigg|^{q(x)}dx \leqslant1 \bigg\}$$
   is a separable reflexive Banach space.
   
    A very important role in manipulating the generalized Lebesgue spaces with variable exponent is played by the modular of the $L^{q(x)}(\Omega)$ space, which defined by
    $$
    \begin{aligned}
   \rho_{q(.)}: L^{q(x)}(\Omega)&\longrightarrow\mathbb{R}\\
     u&\longmapsto\rho_{q(.)}(u)=\int_{\Omega}|u(x)|^{q(x)}dx.
    \end{aligned}
    $$
   \begin{prop}\label{2.2}
   Let $u\in  L^{q(x)}(\Omega) $, then we have
   \begin{enumerate}[label=(\roman*)]
   \item $\|u\|_{L^{q(x)}(\Omega)}<1$ $(resp. =1, >1)$ $\Leftrightarrow$ $ \rho_{q(.)}(u)<1$ $(resp. =1, >1)$,
   \item  $\|u\|_{L^{q(x)}(\Omega)}<1$ $\Rightarrow$ $\|u\|^{q{+}}_{L^{q(x)}(\Omega)}\leqslant \rho_{q(.)}(u)\leqslant \|u\|^{q{-}}_{L^{q(x)}(\Omega)}$,
   \item  $\|u\|_{L^{q(x)}(\Omega)}>1$ $\Rightarrow$ $\|u\|^{q{-}}_{L^{q(x)}(\Omega)}\leqslant \rho_{q(.)}(u)\leqslant \|u\|^{q{+}}_{L^{q(x)}(\Omega)}$.
   \end{enumerate}
   \end{prop}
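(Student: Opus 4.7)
The plan is to reduce everything to the scaling identity
$$\rho_{q(.)}(u/\lambda)=\int_{\Omega}|u(x)|^{q(x)}\lambda^{-q(x)}\,dx$$
combined with the pointwise bound $q^-\leqslant q(x)\leqslant q^+$. I would begin by introducing the auxiliary function $f(\lambda):=\rho_{q(.)}(u/\lambda)$ on $(0,\infty)$ and recording its two key structural properties: (a) $f$ is continuous on $(0,\infty)$ by dominated convergence (the domination uses $u\in L^{q(x)}(\Omega)$ and the boundedness of $\lambda^{-q(x)}$ on compact subsets of $(0,\infty)$, which in turn uses $q^+<\infty$), and (b) $f$ is strictly decreasing whenever $u\not\equiv 0$. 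From (a) and the definition of the Luxemburg norm it follows that, whenever $\alpha:=\|u\|_{L^{q(x)}(\Omega)}>0$, the infimum is attained and $f(\alpha)=1$, i.e.\ $\rho_{q(.)}(u/\alpha)=1$. This equality will be the workhorse for parts (ii) and (iii).

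For part (i), I would split into the three cases $\alpha<1$, $\alpha=1$, $\alpha>1$ and prove the three equivalences separately. For instance, if $\alpha<1$, pick $\lambda'\in(\alpha,1)$ with $f(\lambda')\leqslant 1$; since $\lambda'<1$ gives $(\lambda')^{-q(x)}>1$, one has $\rho_{q(.)}(u)<f(\lambda')\leqslant 1$. The reverse implication runs symmetrically: if $\rho_{q(.)}(u)<1$ then $f(1)<1$, and continuity of $f$ yields some $\lambda<1$ with $f(\lambda)\leqslant 1$, hence $\alpha\leqslant\lambda<1$. The cases $\alpha=1$ and $\alpha>1$ are handled by exactly the same monotonicity/continuity argument.

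For parts (ii) and (iii), I would use the attained equality $\rho_{q(.)}(u/\alpha)=1$ together with the decomposition
$$|u(x)|^{q(x)}=\alpha^{q(x)}\left|\frac{u(x)}{\alpha}\right|^{q(x)}.$$
When $\alpha<1$ the map $q\mapsto \alpha^{q}$ is decreasing, so $\alpha^{q^+}\leqslant\alpha^{q(x)}\leqslant \alpha^{q^-}$; integrating against the density $|u/\alpha|^{q(x)}$ (whose integral is $1$) gives $\alpha^{q^+}\leqslant\rho_{q(.)}(u)\leqslant\alpha^{q^-}$, which is (ii). When $\alpha>1$ the same map is increasing and the inequalities flip, producing (iii).

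The only nontrivial point is the attainment $f(\alpha)=1$, which is the hinge of (ii)--(iii); everything else is a one-line scaling estimate. Establishing this cleanly requires invoking continuity of $f$, and thereby the hypothesis $q\in C_{+}(\overline{\Omega})$ with $q^+<\infty$, so I would be careful to justify the dominated-convergence step once at the beginning rather than invoke it implicitly in each case.
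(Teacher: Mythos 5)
The paper does not actually prove Proposition~\ref{2.2}; it is a standard fact about variable-exponent Lebesgue spaces recalled from the cited references (Fan--Zhao, Kov\'a\v{c}ik--R\'akosn\'ik). Your proposal is correct and coincides with the standard argument used in those sources: introduce the scaling function $f(\lambda)=\rho_{q(.)}(u/\lambda)$, establish its continuity (via dominated convergence, using $q^+<\infty$ and $u\in L^{q(x)}(\Omega)$) and strict monotonicity for $u\not\equiv 0$, deduce $f(\alpha)=1$ at $\alpha=\|u\|_{L^{q(x)}(\Omega)}>0$, and then read off (i)--(iii) from the elementary inequality $\alpha^{q^+}\wedge\alpha^{q^-}\leqslant\alpha^{q(x)}\leqslant\alpha^{q^+}\vee\alpha^{q^-}$. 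The only small inaccuracy is in the case analysis for (i): when $\alpha<1$ you write $\rho_{q(.)}(u)<f(\lambda')$, which is an equality if $u\equiv 0$; since that case is trivial ($\rho_{q(.)}(0)=0<1$), it is harmless, but you should either exclude $u\equiv 0$ at the outset or replace the strict inequality by $\leqslant$ and get the strict conclusion from the trivial case. Everything else, including the hinge claim $f(\alpha)=1$ and the scaling bounds, is sound.
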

      Finally, the proof of our existence result is based on the following Ekeland's variational principle theorem. 
   \begin{thm}\label{th1}(\cite{ek})
   Let V be a complete metric space and $F : V \longrightarrow \R\cup \left\lbrace +\infty\right\rbrace$ be a lower semicontinuous functional on $V$, that is bounded below and not identically equal to $+\infty$. Fix $\varepsilon>0$ and a  point $u\in V$ 
     such that
    $$F(u)\leqslant \varepsilon +\inf\limits_{x\in V}F(x).$$ Then for every $\gamma > 0$,
     there exists some point $v\in V$ such that :
     $$F(v)\leqslant F(u),$$
     $$d(u,v)\leqslant \gamma,$$
     and for all $w\neq v$
     $$F(w)> F(v)-\dfrac{\varepsilon}{\gamma}d(v,w).$$
   \end{thm}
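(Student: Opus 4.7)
The plan is to construct $v$ as a minimal element of a carefully designed partial order on $V$. Set $\mu := \varepsilon/\gamma$ and define $y \preceq x$ iff $F(y) + \mu\, d(x,y) \leqslant F(x)$. Reflexivity is immediate, antisymmetry follows because $y \preceq x$ together with $x \preceq y$ forces $d(x,y) = 0$, and transitivity is a one-line calculation using the triangle inequality. The three conclusions of the theorem translate into a single assertion: there exists a $\preceq$-minimal element $v \in V$ satisfying $v \preceq u$ and $d(u,v) \leqslant \gamma$; minimality then means that no $w \neq v$ satisfies $F(w) + \mu\, d(v,w) \leqslant F(v)$, which is precisely the announced strict inequality.

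To produce such a $v$, I build a sequence $(u_n)$ by setting $u_0 := u$ and, given $u_n$, choosing $u_{n+1} \in S_n := \{y \in V : y \preceq u_n\}$ with $F(u_{n+1}) \leqslant \inf_{S_n} F + 2^{-n}$; this is possible because $u_n \in S_n$ and $F$ is bounded below. For $m > n$, transitivity gives $u_m \preceq u_n$, whence $\mu\, d(u_n,u_m) \leqslant F(u_n) - F(u_m)$. Since $\bigl(F(u_n)\bigr)$ is nonincreasing and bounded below it converges, so $(u_n)$ is Cauchy in the complete space $V$ and converges to some $v \in V$.

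Next I pass to the limit. For fixed $n$, letting $m \to \infty$ in $F(u_m) + \mu\, d(u_n,u_m) \leqslant F(u_n)$ and using lower semicontinuity of $F$ yields $F(v) + \mu\, d(u_n,v) \leqslant F(u_n)$, i.e., $v \preceq u_n$ for every $n$. Taking $n = 0$ gives $F(v) \leqslant F(u)$ and $\mu\, d(u,v) \leqslant F(u) - F(v) \leqslant F(u) - \inf_V F \leqslant \varepsilon$, so $d(u,v) \leqslant \gamma$. For minimality, if $w \preceq v$ then by transitivity $w \preceq u_n$ for every $n$, hence $w \in S_n$ and $F(u_{n+1}) \leqslant F(w) + 2^{-n}$; passing $n \to \infty$ and again invoking lower semicontinuity gives $F(v) \leqslant F(w)$, which combined with $F(w) + \mu\, d(v,w) \leqslant F(v)$ forces $d(v,w) = 0$. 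Thus $w = v$, and the desired strict inequality $F(w) > F(v) - \mu\, d(v,w)$ holds for every $w \neq v$.

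The main technical obstacle lies in the interaction between the inductive selection of $u_{n+1}$ and the passage to the limit: one has to guarantee that the approximating infima are $\preceq$-comparable (so the telescoping bound on $d(u_n,u_m)$ by $F(u_n) - F(u_m)$ is legitimate), and that \emph{lower} semicontinuity of $F$, rather than full continuity, is strong enough to transfer the order relation $u_m \preceq u_n$ to $v \preceq u_n$ in the limit. Once these two points are settled, the remaining computations are routine.
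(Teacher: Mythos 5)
The paper states this as a cited classical result (Ekeland, 1974) and gives no proof of its own, so there is no in-paper argument to compare against. Your proof is correct and is the standard argument via the Bishop--Phelps/Br\o{}nsted partial order $y\preceq x \iff F(y)+\mu\,d(x,y)\leqslant F(x)$ with $\mu=\varepsilon/\gamma$: the iterative almost-minimizing selection, the Cauchy estimate $\mu\,d(u_n,u_m)\leqslant F(u_n)-F(u_m)$, the passage to the limit using lower semicontinuity and continuity of the metric to preserve $\preceq$, and the final minimality argument combining $F(v)\leqslant F(w)$ (from the $2^{-n}$-tolerances) with $w\preceq v$ to force $d(v,w)=0$ are all carried out correctly, including the verification $d(u,v)\leqslant \gamma$ from $F(u)-F(v)\leqslant\varepsilon$.
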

           
\section{$s(.,.)$-fractional Musielak-Sobolev spaces}\label{S3}
Due to the non-locality of the operator $\left( -\Delta\right)^{s(x,.)}_{a_{(x,.)}}$, we introduce  the  $s(.,.)$-fractional Musielak-Sobolev space as follows
 \begingroup\makeatletter\def\f@size{8}\check@mathfonts
$$W^{s(x,y)}{L_{\varPhi_{x,y}}}(\Omega)=\Bigg\{u\in L_{\widehat{\varPhi}_x}(\Omega) :  \int_{\Omega} \int_{\Omega} \varPhi_{x,y}\left( \dfrac{\lambda| u(x)- u(y)|}{|x-y|^{s(x,y)}}\right) \dfrac{dxdy}{|x-y|^N}< \infty \text{ for some } \lambda >0 \Bigg\}.
$$\endgroup

This space can be equipped with the norm
\begin{equation}\label{s2}
||u||_{s(x,y),\varPhi_{x,y}}=||u||_{\widehat{\varPhi}_x}+[u]_{s(x,y),\varPhi_{x,y}},
\end{equation}
where $[u]_{s(x,y),\varPhi_{x,y}}$ is the Gagliardo seminorm defined by 
$$[u]_{s(x,y),\varPhi_{x,y}}=\inf \Bigg\{\lambda >0 :  \int_{\Omega} \int_{\Omega} \varPhi_{x,y}\left( \dfrac{|u(x)- u(y)|}{\lambda|x-y|^{s(x,y)}}\right) \dfrac{dxdy}{|x-y|^N}\leqslant 1 \Bigg\}.
$$
To simplify notations, throughout the rest of this paper, we set
    $$D^{s(x,y)}u=\dfrac{u(x)-u(y)}{|x-y|^{s(x,y)}},~~ \text{ and }
     d\mu= \dfrac{dxdy}{|x-y|^N}. $$ 
\begin{rem}\text{ }\\
a$)-$ For the case: $\varPhi_{x,y}(t)=\varPhi(t)$, i.e. $\varPhi$ is independent of variables $x,y$, we can introduce the  $s(.,.)$-fractional Orlicz-Sobolev  spaces $W^{s(x,y)}L_\varPhi(\Omega)$ as follows
\begingroup\makeatletter\def\f@size{9}\check@mathfonts
$$
W^{s(x,y)}{L_\varPhi}(\Omega)
      =\Bigg\{u\in L_\varPhi(\Omega) :  \int_{\Omega} \int_{\Omega} \varPhi\left( \dfrac{\lambda| u(x)- u(y)|}{|x-y|^{s(x,y)}}\right)\dfrac{ dxdy}{|x-y|^N}< \infty \textnormal{ for some } \lambda >0 \Bigg\}.     
$$
       \endgroup
b$)-$ For the case: $\varPhi_{x,y}(t)=|t|^{p(x,y)}$ for all $(x,y)\in \overline{\Omega}\times\overline{\Omega}$,  where $p:\overline{\Omega}\times\overline{\Omega}\longrightarrow(1,+\infty)$ is a continuous bounded function such that
\begin{equation*} 
1<p^{-}=\underset{(x,y)\in \overline{\Omega}\times\overline{\Omega}}{\min}p(x,y)\leqslant p(x,y)\leqslant p^{+}=\underset{(x,y)\in \overline{\Omega}\times\overline{\Omega}}{\max}p(x,y)<+\infty,
\end{equation*}
and
\begin{equation*}
p ~\text{is symmetric, that is, }~~ p(x,y)=p(y,x),~~~ \text{for all }(x,y)\in\overline{\Omega}\times\overline{\Omega}.
\end{equation*}
If denoted by
$\bar{p}(x)=p(x,x)$ for all $x\in \overline{\Omega}.$ Then,
 we replace $L_{\varPhi_x}$ by $L^{\overline{p}(x)}$, and $W^{s(x,y)}L_{\varPhi_{x,y}}$ by $W^{s(x,y),p(x,y)}$ and we refer them as variable exponent Lebesgue spaces,  and $s(.,.)$-fractional Sobolev spaces with variable exponent respectively, $($see \cite{s1,s2.,s2}$)$ defined by
$$L^{\overline{p}(x)}(\Omega)=\bigg\{u:\Omega\longrightarrow \mathbb{R} ~~\text{measurable}: \int_{\Omega}|u(x)|^{\overline{p}(x)}dx<+\infty
\bigg\},$$ and 
 $$\hspace{-9cm}W=W^{s(x,y),p(x,y)}(\Omega)$$
\begingroup\makeatletter\def\f@size{9}\check@mathfonts $$\hspace*{0.6cm}=\bigg\{u\in L^{\bar{p}(x)}(\Omega): \int_{\Omega\times\Omega}\frac{|u(x)-u(y)|^{p(x,y)}}{\lambda^{p(x,y)}|x-y|^{s(x,y)p(x,y)+N}}~dxdy <+\infty~~ \text{for some}~~\lambda>0\bigg\}$$\endgroup
with the norm
$$\|u\|_{W}=\|u\|_{L^{\bar{p}(x)}(\Omega)}+[u]_{W},$$
where $ [.]_{W}$ is a Gagliardo seminorm with variable exponent given by $$[u]_{W}=[u]_{s(x,y),p(x,y)}= \inf \bigg\{\lambda>0:\int_{\Omega\times\Omega}\frac{|u(x)-u(y)|^{p(x,y)}}{\lambda^{p(x,y)}|x-y|^{N+s(x,y)p(x,y)}}~dxdy \leqslant1 \bigg\}.$$
\end{rem}
\begin{thm}
       Let $\Omega$ be an open subset of $\R^N$. The space $W^{s(x,y)}L_{\varPhi_{x,y}}(\Omega)$ is a Banach space with respect to the norm $(\ref{s2})$, and a  separable $($resp. reflexive$)$ space if and only if $\varPhi_{x,y} \in \Delta_2$ $($resp. $\varPhi_{x,y}\in \Delta_2 $ and $\overline{\varPhi}_{x,y}\in \Delta_2$$)$. Furthermore,
              if   $\varPhi_{x,y} \in \Delta_2$ and $\varPhi_{x,y}(\sqrt{t})$ is convex, then  the space $W^{s(x,y)}L_{\varPhi_{x,y}}(\Omega)$ is an uniformly convex space.\end{thm}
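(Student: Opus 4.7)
The plan is to mimic the argument used for the constant-$s$ case in \cite{benkirane}, replacing the exponent $s$ with the variable function $s(x,y)$ throughout. The key observation is that, since $s(x,y)$ only enters through the kernel $|x-y|^{-s(x,y)}$ inside the Gagliardo-type integral, one can still realise $W^{s(x,y)}L_{\varPhi_{x,y}}(\Omega)$ as a closed subspace of a product of two Musielak spaces, and then transfer the Banach-space, separability, reflexivity and uniform convexity properties from the factors to the subspace.

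More precisely, I would introduce the Musielak function $\Phi_{x,y}$ on the product space $\Omega\times\Omega$ equipped with the measure $d\mu=dxdy/|x-y|^N$ and consider the map
$$T:W^{s(x,y)}L_{\varPhi_{x,y}}(\Omega)\longrightarrow L_{\widehat{\varPhi}_x}(\Omega)\times L_{\varPhi_{x,y}}(\Omega\times\Omega,d\mu),\qquad T(u)=\bigl(u,\,D^{s(x,y)}u\bigr).$$
By the very definition of the norm \eqref{s2}, $T$ is a linear isometry if the target is endowed with the sum of the two Luxemburg norms. The first step would be to verify that both factors are Banach spaces (separable, resp.\ reflexive, under the stated $\Delta_2$-assumptions on $\varPhi_{x,y}$ and $\overline{\varPhi}_{x,y}$), using the standard theory of Musielak–Orlicz spaces; for the second factor one only needs to notice that $d\mu$ is a $\sigma$-finite measure on $\Omega\times\Omega$ and that the variable-in-$(x,y)$ nature of both $\varPhi_{x,y}$ and the kernel $|x-y|^{-s(x,y)}$ does not interfere with the generalities on modular spaces.

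Next I would show that the image $T\bigl(W^{s(x,y)}L_{\varPhi_{x,y}}(\Omega)\bigr)$ is closed in the product. Given a sequence $(u_n)$ with $T(u_n)\to(v,w)$, the first component gives $u_n\to v$ in $L_{\widehat{\varPhi}_x}(\Omega)$, so along a subsequence $u_n\to v$ a.e.\ in $\Omega$; consequently $D^{s(x,y)}u_n\to D^{s(x,y)}v$ a.e.\ in $\Omega\times\Omega$. Combining this pointwise limit with the $L_{\varPhi_{x,y}}(d\mu)$-convergence of $D^{s(x,y)}u_n$ to $w$ and the uniqueness of limits in measure, one concludes $w=D^{s(x,y)}v$, so $(v,w)=T(v)$ lies in the image. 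Hence the image is closed, and therefore inherits Banach, separability, reflexivity and uniform convexity from the product. Uniform convexity of the two factors, in turn, follows from the assumption that $\varPhi_{x,y}\in\Delta_2$ and $t\mapsto\varPhi_{x,y}(\sqrt{t})$ is convex (condition \eqref{f2.}), which is exactly the hypothesis used in the constant-$s$ theorem and carries over verbatim.

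The main obstacle I expect is the closedness of the image, because one must carefully handle the a.e.\ extraction of a subsequence on the product space $\Omega\times\Omega$ with the singular measure $d\mu$, and verify that norm convergence in $L_{\varPhi_{x,y}}(d\mu)$ does imply convergence in measure (which is standard once $\varPhi_{x,y}\in\Delta_2$, via the equivalence of norm and modular convergence). The other delicate point is checking separability of $L_{\varPhi_{x,y}}(\Omega\times\Omega,d\mu)$: here one can use that simple functions with rational coefficients supported on rational rectangles avoiding the diagonal form a dense countable subset, precisely because $\varPhi_{x,y}\in\Delta_2$ makes the modular topology coincide with the norm topology on bounded sets. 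Everything else is a routine transcription of the $s$-constant proof in \cite{benkirane}.
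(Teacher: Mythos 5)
Your overall scheme---realising $W^{s(x,y)}L_{\varPhi_{x,y}}(\Omega)$ via the isometry $T(u)=(u,\,D^{s(x,y)}u)$ as a closed subspace of $L_{\widehat{\varPhi}_x}(\Omega)\times L_{\varPhi_{x,y}}(\Omega\times\Omega,d\mu)$, with closedness obtained by extracting an a.e.\ convergent subsequence and matching the $L_{\varPhi_{x,y}}(d\mu)$-limit of $D^{s(x,y)}u_n$ against the pointwise one---is exactly the standard argument, and it is almost certainly what the paper means by ``similar to \cite[Theorem~2.1]{benkirane}''. The variability of $s(x,y)$ is indeed harmless here, since it only enters through the integrand, and your verification that convergence in Luxemburg norm implies a.e.\ convergence of a subsequence with respect to $d\mu$ is correct. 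So for the Banach-space assertion and the \emph{sufficiency} of the $\Delta_2$ conditions for separability and reflexivity, the proposal is sound.

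There are, however, two genuine gaps. First, uniform convexity does \emph{not} pass to a product carrying the $\ell^1$-sum norm: already $\mathbb{R}\oplus_1\mathbb{R}$ has a square unit ball, hence is not even strictly convex. The norm \eqref{s2} is precisely the $\ell^1$-sum $\|u\|_{\widehat{\varPhi}_x}+[u]_{s(x,y),\varPhi_{x,y}}$, so the product you embed into is not uniformly convex, and the closed-subspace inheritance argument stalls. A closed subspace of a non-uniformly-convex space can of course still be uniformly convex, but one must prove it by other means. The way to repair this is to argue at the level of modulars: condition \eqref{f2.} gives, via Lamperti's inequality (\cite[Lemma~2.1]{Lam}, used again later as relation \eqref{an34}), the pointwise Clarkson-type estimate
\[
\varPhi_{x,y}\!\left(\frac{|a+b|}{2}\right)+\varPhi_{x,y}\!\left(\frac{|a-b|}{2}\right)\;\leqslant\;\frac12\bigl(\varPhi_{x,y}(|a|)+\varPhi_{x,y}(|b|)\bigr),
\]
and the same for $\widehat{\varPhi}_x$; integrating and adding the two modulars preserves this convexity estimate for the modular $J$ of \eqref{smodN}, and combining it with the $\Delta_2$-driven equivalence of modular and norm (Proposition~\ref{smod}) gives uniform convexity of the Luxemburg norm directly. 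Second, your argument establishes only the ``if'' halves of the two equivalences: closed-subspace inheritance gives that $\Delta_2$ (resp.\ $\Delta_2$ and $\overline{\varPhi}_{x,y}\in\Delta_2$) is \emph{sufficient} for separability (resp.\ reflexivity), but says nothing about necessity. The ``only if'' direction needs a separate argument, as in the classical Orlicz case: when $\varPhi_{x,y}\notin\Delta_2$ one exhibits an isomorphic copy of $\ell^\infty$ (or an uncountable $1$-separated family) inside the space, and similarly when $\overline{\varPhi}_{x,y}\notin\Delta_2$ one finds a non-reflexive subspace; neither of these is supplied by the embedding $T$.
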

    Proof of this Theorem is similar to  \cite[Theorem 2.1]{benkirane}.          

\begin{thm}\label{s3.2}
        Let $\Omega$  be a bounded open
              subset of  $\R^N$. Then
          $$W^{s^+}L_{\varPhi_{x,y}}(\Omega) \hookrightarrow  W^{s(x,y)}L_{\varPhi_{x,y}}(\Omega) \hookrightarrow W^{s^-}L_{\varPhi_{x,y}}(\Omega).$$ \end{thm}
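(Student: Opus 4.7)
The plan is to reduce both embeddings to comparing the Gagliardo seminorms $[\cdot]_{s^\pm,\varPhi_{x,y}}$ and $[\cdot]_{s(x,y),\varPhi_{x,y}}$, since the $L_{\widehat{\varPhi}_x}(\Omega)$ contribution is identical in the three norms. The decisive fact will be that $\Omega$ is bounded, so $|x-y|$ ranges over $(0,d]$ with $d:=\mathrm{diam}(\Omega)<\infty$, which lets me uniformly compare $|x-y|^{-s(x,y)}$ with $|x-y|^{-s^{\pm}}$.

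First I would fix $(x,y)\in\Omega\times\Omega$ and write $|x-y|^{s(x,y)}=|x-y|^{s^+}\cdot|x-y|^{s(x,y)-s^+}$. Since $s(x,y)-s^+\in[s^--s^+,0]$ and $|x-y|\in(0,d]$, an elementary case split on whether $|x-y|\le1$ or $|x-y|>1$ shows
$$|x-y|^{s^+-s(x,y)}\;\le\;C_1:=\max\bigl(1,d^{\,s^+-s^-}\bigr),$$
a constant depending only on $\Omega,s^+,s^-$. Hence $|x-y|^{-s(x,y)}\le C_1\,|x-y|^{-s^+}$. Using the monotonicity of $\varPhi_{x,y}$, for every $\lambda>0$,
$$\varPhi_{x,y}\!\left(\tfrac{|u(x)-u(y)|}{\lambda|x-y|^{s(x,y)}}\right)\le\varPhi_{x,y}\!\left(\tfrac{|u(x)-u(y)|}{(\lambda/C_1)|x-y|^{s^+}}\right).$$
Integrating against $d\mu=dx\,dy/|x-y|^N$, if I pick $\mu>[u]_{s^+,\varPhi_{x,y}}$ (so that the RHS integrated is $\le 1$) and set $\lambda=C_1\mu$, then the LHS integrated is also $\le 1$. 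Taking infimum and letting $\mu\searrow[u]_{s^+,\varPhi_{x,y}}$ yields $[u]_{s(x,y),\varPhi_{x,y}}\le C_1\,[u]_{s^+,\varPhi_{x,y}}$, and combined with equality of the $L_{\widehat{\varPhi}_x}$ terms this gives the first embedding with constant $\max(1,C_1)$.

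For the second embedding the argument is symmetric: writing $|x-y|^{s^-}=|x-y|^{s(x,y)}\cdot|x-y|^{s^--s(x,y)}$ and using $s(x,y)-s^-\in[0,s^+-s^-]$ produces
$$|x-y|^{s(x,y)-s^-}\le C_2:=\max\bigl(1,d^{\,s^+-s^-}\bigr),$$
so that $|x-y|^{-s^-}\le C_2|x-y|^{-s(x,y)}$. The same modular/monotonicity substitution then yields $[u]_{s^-,\varPhi_{x,y}}\le C_2\,[u]_{s(x,y),\varPhi_{x,y}}$, and the second embedding follows.

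No serious obstacle is expected; the only point that requires care is making the constant $C_1$ (and $C_2$) uniform in $(x,y)$, which is exactly where the boundedness of $\Omega$ enters. Note that the argument uses only the monotonicity and continuity of $\varPhi_{x,y}$, so none of the growth conditions \hyperref[v1]{$(\varPhi_1)$}, \hyperref[f2.]{$(\varPhi_2)$} or \hyperref[v3]{$(\varPhi_3)$} are invoked.
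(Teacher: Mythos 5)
Your proof is correct, but it takes a genuinely different route from the paper's. The paper also reduces to a comparison of the Gagliardo seminorms and peels off the factor $|x-y|^{s^+-s(x,y)}$, but to do so it applies the polynomial growth inequalities of Lemma \ref{2.2..} (hence assumption \hyperref[v1]{$(\varPhi_1)$}), replacing $\varPhi_{x,y}(\sigma t)$ by $\sigma^{p}\varPhi_{x,y}(t)$ with $p\in\{\varphi^-,\varphi^+\}$ chosen according to whether $\sigma=|x-y|^{s^+-s(x,y)}$ is smaller or larger than $1$, and then pulling $\sup_{\overline\Omega\times\overline\Omega}|x-y|^{p(s^+-s(x,y))}$ out of the modular. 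You instead bound $|x-y|^{-s(x,y)}\le C_1|x-y|^{-s^+}$ pointwise, invoke only the monotonicity of $\varPhi_{x,y}$, and absorb the constant $C_1$ into the Luxemburg scaling parameter $\lambda$. Your version is both simpler (no case split feeding into the choice of $p$ inside the $\varPhi$-argument, and a clean explicit constant $C_1=\max(1,d^{\,s^+-s^-})$) and more general, since it does not use \hyperref[v1]{$(\varPhi_1)$} at all — it would hold for any Musielak function $\varPhi_{x,y}$. The paper's route keeps everything at the level of modulars before passing to norms, which matches the style of its other estimates, but gains nothing here; your observation that the embedding is really a consequence of monotonicity alone is a genuine simplification.
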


   \begin{proof}[\textbf{Proof}]
Let $u\in W^{s^+}L_{\varPhi_{x,y}}(\Omega)$ and $\lambda>0$, we have

  \begingroup\makeatletter\def\f@size{9}\check@mathfonts$$  
     \begin{aligned}
     \int_{\Omega} \int_{\Omega} \varPhi_{x,y}\left( \dfrac{|D^{s(x,y)}u|}{\lambda}\right)\dfrac{dxdy}{|x-y|^N}&=\int_{\Omega} \int_{\Omega} \varPhi_{x,y}\left( \dfrac{|D^{s^+}u|}{\lambda}\dfrac{1}{|x-y|^{s(x,y)-s^+}}\right)\dfrac{dxdy}{|x-y|^N}\\
     &\leqslant \int_{\Omega} \int_{\Omega} \varPhi_{x,y}\left( \dfrac{|D^{s^+}u|}{\lambda}\right)\dfrac{ dxdy}{|x-y|^{N+p(s(x,y)-s^+)}}\\
     &\leqslant \sup\limits_{\overline{\Omega}\times \overline{\Omega}}|x-y|^{p(s^+-s(x,y))} \int_{\Omega} \int_{\Omega} \varPhi_{x,y}\left( \dfrac{|D^{s^+}u|}{\lambda}\right)\dfrac{ dxdy}{|x-y|^{N}}\\
     \end{aligned}
     $$\endgroup
     where $p=\left\lbrace\varphi^-\text{ or } \varphi^+\right\rbrace$ is given by Lemma \ref{2.2..}.  This implies that 
$$[u]_{s(x,y),\varPhi_{x,y}}\leqslant \sup\limits_{\overline{\Omega}\times \overline{\Omega}}|x-y|^{p(s^+-s(x,y))}[u]_{s^+,\varPhi_{x,y}}.$$
So 
$$\|u\|_{s(x,y),\varPhi_{x,y}}\leqslant c\|u\|_{s^+,\varPhi_{x,y}},$$
where $c=\max\left\lbrace 1, \sup\limits_{\overline{\Omega}\times \overline{\Omega}}|x-y|^{p(s^+-s(x,y))}\right\rbrace$.

Now, Let $u\in W^{s(x,y)}L_{\varPhi_{x,y}}(\Omega)$ and $\lambda>0$, we have

  \begingroup\makeatletter\def\f@size{9}\check@mathfonts$$  
     \begin{aligned}
     \int_{\Omega} \int_{\Omega} \varPhi_{x,y}\left( \dfrac{|D^{s^-}u|}{\lambda}\right)\dfrac{dxdy}{|x-y|^N}&=\int_{\Omega} \int_{\Omega} \varPhi_{x,y}\left( \dfrac{|D^{s(x,y)}u|}{\lambda}\dfrac{1}{|x-y|^{s^--s(x,y)}}\right)\dfrac{dxdy}{|x-y|^N}\\
     &\leqslant \int_{\Omega} \int_{\Omega} \varPhi_{x,y}\left( \dfrac{|D^{s(x,y)}u|}{\lambda}\right)\dfrac{ dxdy}{|x-y|^{N+p(s^--s(x,y))}}\\
     &\leqslant \sup\limits_{\overline{\Omega}\times \overline{\Omega}}|x-y|^{p(s(x,y)-s^-)} \int_{\Omega} \int_{\Omega} \varPhi_{x,y}\left( \dfrac{|D^{s(x,y)}u|}{\lambda}\right)\dfrac{ dxdy}{|x-y|^{N}}.\\
     \end{aligned}
     $$\endgroup
  This implies that 
$$[u]_{s^-,\varPhi_{x,y}}\leqslant \sup\limits_{\overline{\Omega}\times \overline{\Omega}}|x-y|^{p(s(x,y)-s^-)}[u]_{s(x,y),\varPhi_{x,y}}.$$
So 
$$\|u\|_{s^-,\varPhi_{x,y}}\leqslant c\|u\|_{s(x,y),\varPhi_{x,y}},$$
where $c=\max\left\lbrace 1, \sup\limits_{\overline{\Omega}\times \overline{\Omega}}|x-y|^{p(s(x,y)-s^-)}\right\rbrace$.
\end{proof}
Now, combining Theorem \ref{s3.2} and Theorem \ref{th2.}, we obtain the following results.
\begin{coro}\label{s333}
 Let $\Omega$  be a bounded open
       subset of  $\R^N$ with $C^{0,1}$-regularity 
         and bounded boundary. If $(\ref{15})$ and  $(\ref{16n})$  hold, then 
      \begin{equation*}\label{s18}
       W^{s(x,y)}{L_{\varPhi_{x,y}}}(\Omega)\hookrightarrow L_ {\widehat{\varPhi}^*_{x,s^-}}(\Omega).
      \end{equation*}
               Also, the embedding
                \begin{equation*}\label{s27}
                 W^{s(x,y)}{L_{\varPhi_{x,y}}}(\Omega)\hookrightarrow L_{B_x}(\Omega),
                \end{equation*}
                is compact for all $B_x\prec\prec \widehat{\varPhi}^*_{x,s^-}$.
\end{coro}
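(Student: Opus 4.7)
The plan is to obtain both embeddings by composing two already-established inclusions. First I would apply Theorem \ref{s3.2} with the exponent $s^-$ to obtain the continuous embedding
$$W^{s(x,y)}L_{\varPhi_{x,y}}(\Omega)\hookrightarrow W^{s^-}L_{\varPhi_{x,y}}(\Omega).$$
Since $\Omega$ has $C^{0,1}$-regularity and bounded boundary and $s^-\in(0,1)$, and since the hypotheses \eqref{15}--\eqref{16n} on $\widehat{\varPhi}_x$ hold for $s = s^-$ (they are assumed to hold for all relevant $s$ in the range, and in particular at $s^-$), Theorem \ref{th2.} applied at the constant exponent $s^-$ gives
$$W^{s^-}L_{\varPhi_{x,y}}(\Omega)\hookrightarrow L_{\widehat{\varPhi}^*_{x,s^-}}(\Omega),$$
which is continuous. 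Composing the two continuous embeddings yields the first claim.

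For the compactness statement, I would follow the same composition strategy but place the compact link at the second step. Namely, given any Musielak function $B_x$ with $B_x\prec\prec \widehat{\varPhi}^*_{x,s^-}$, the compactness part of Theorem \ref{th2.} (applied at $s=s^-$) gives a compact embedding
$$W^{s^-}L_{\varPhi_{x,y}}(\Omega)\hookrightarrow L_{B_x}(\Omega).$$
Since the composition of a continuous linear map with a compact one is compact, combining this with the continuous inclusion from Theorem \ref{s3.2} produces the desired compact embedding
$$W^{s(x,y)}L_{\varPhi_{x,y}}(\Omega)\hookrightarrow L_{B_x}(\Omega).$$

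No genuine obstacle appears here beyond bookkeeping, because all the analytic work (Gagliardo-seminorm estimates controlling the variable-$s$ norm by the constant-$s$ one, and the Musielak--Sobolev embedding at a fixed exponent) has already been carried out in Theorems \ref{s3.2} and \ref{th2.}. The only minor point to check is that the constants in the chain of embeddings are finite, which follows from the boundedness of $\Omega$ (so that $\sup_{\overline{\Omega}\times\overline{\Omega}}|x-y|^{p(s(x,y)-s^-)}$ is finite) and the Musielak inverse integrability conditions \eqref{15}--\eqref{16n} at $s=s^-$. Thus the proof reduces to stating these two compositions, as indicated by the sentence preceding the corollary.
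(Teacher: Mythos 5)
Your argument is exactly the one the paper intends: compose the continuous embedding $W^{s(x,y)}L_{\varPhi_{x,y}}(\Omega)\hookrightarrow W^{s^-}L_{\varPhi_{x,y}}(\Omega)$ from Theorem \ref{s3.2} with the (continuous, resp.\ compact) embeddings of Theorem \ref{th2.} at the fixed exponent $s^-$, using that a composition of a bounded map with a compact one is compact. The paper states precisely this (``combining Theorem \ref{s3.2} and Theorem \ref{th2.}'') and offers no further proof, so your write-up matches the intended route.
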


 For any $u \in W^{s(x,y)}L_{\varPhi_{x,y}}(\Omega)$, we define the modular function on  $W^{s(x,y)}L_{\varPhi_{x,y}}(\Omega)$  as follows  
                    \begin{equation}\label{smodN}
        J(u)=\displaystyle\int_{\Omega} \int_{\Omega} \varPhi_{x,y}\left( \dfrac{ |u(x)- u(y)|}{|x-y|^{s(x,y)}}\right) \dfrac{dxdy}{|x-y|^N}+\int_{\Omega}\widehat{\varPhi}_{x}\left( |u(x)|\right) dx. \end{equation}  
                     
   An important role in manipulating  the $s(.,.)$-fractional Musielak-Sobolev spaces is played by the modular function $(\ref{smodN})$. It is worth noticing that the relation between the norm and the modular shows an equivalence between the topology defined by the norm and that defined by the modular.                                     
        \begin{prop}\label{smod}
         Assume that (\ref{v1}) is satisfied. Then for any $u \in W^{s(x,y)}L_{\varPhi_{x,y}}(\Omega)$, the following relations hold true:
           \begin{equation}\label{smod1}
     ||u||_{s(x,y),\varPhi_{x,y}}>1\Longrightarrow      ||u||_{s(x,y),\varPhi_{x,y}}^{\varphi^-} \leqslant  J(u)\leqslant  ||u||_{s(x,y),\varPhi_{x,y}}^{\varphi^+},
           \end{equation}
           \begin{equation}\label{smod2}
                ||u||_{s(x,y),\varPhi_{x,y}}<1\Longrightarrow    ||u||_{s(x,y),\varPhi_{x,y}}^{\varphi^+} \leqslant  J(u)\leqslant  ||u||_{s(x,y),\varPhi_{x,y}}^{\varphi^-}. \end{equation}
\end{prop}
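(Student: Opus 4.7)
My plan is to mirror the proof of the constant-$s$ analog, Proposition \ref{mod}, since the scaling inequalities of Lemma \ref{2.2..} are uniform in $(x,y)$ and therefore apply verbatim in the variable-exponent setting; the exponent $s(x,y)$ enters only inside the singular kernel $|x-y|^{-s(x,y)}$ and is not touched by the scaling argument.

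Set $\Lambda := \|u\|_{s(x,y),\varPhi_{x,y}} = \|u\|_{\widehat{\varPhi}_x} + [u]_{s(x,y),\varPhi_{x,y}}$, so that both summands lie in $[0,\Lambda]$. The $\Delta_2$-property of $\widehat{\varPhi}_x$ and $\varPhi_{x,y}$ (which follows from \eqref{v1} via \eqref{r1}--\eqref{rr1}) ensures that the Luxemburg infima defining each summand are attained, so that for $u\not\equiv 0$ one has the modular identities
\begin{equation*}
\int_\Omega \widehat{\varPhi}_x\!\left(\frac{|u(x)|}{\|u\|_{\widehat{\varPhi}_x}}\right)\!dx = 1, \qquad \int_\Omega\!\int_\Omega \varPhi_{x,y}\!\left(\frac{|u(x)-u(y)|}{[u]_{s(x,y),\varPhi_{x,y}}\,|x-y|^{s(x,y)}}\right)\!d\mu = 1.
\end{equation*}
Moreover, since $\|u\|_{\widehat{\varPhi}_x},\,[u]_{s(x,y),\varPhi_{x,y}} \leq \Lambda$, monotonicity of $\widehat{\varPhi}_x$ and $\varPhi_{x,y}$ additionally gives $\int_\Omega \widehat{\varPhi}_x(|u|/\Lambda)\,dx \leq 1$ and $\int_\Omega\int_\Omega \varPhi_{x,y}(|D^{s(x,y)}u|/\Lambda)\,d\mu \leq 1$.

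In the regime $\Lambda > 1$, the upper bound in \eqref{smod1} is obtained by applying \eqref{r10} of Lemma \ref{2.2..} with $\sigma=\Lambda$ in the form $\varPhi_{x,y}(t) \leq \Lambda^{\varphi^+}\varPhi_{x,y}(t/\Lambda)$ (and the analog for $\widehat{\varPhi}_x$), then integrating and summing the two pieces of $J(u)$ using the previous modular bounds. For the matching lower bound $J(u) \geq \Lambda^{\varphi^-}$, I use \eqref{3.} of Lemma \ref{2.2..} with $\sigma$ chosen equal to whichever of $\|u\|_{\widehat{\varPhi}_x}$ or $[u]_{s(x,y),\varPhi_{x,y}}$ is the larger summand (hence $\geq \Lambda/2 > 1$), substitute the modular identities above, and conclude. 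The regime $\Lambda < 1$ is handled symmetrically: \eqref{r11} replaces \eqref{r10} for the upper bound and \eqref{3.2} replaces \eqref{3.} for the lower bound, which effectively interchanges the roles of $\varphi^-$ and $\varphi^+$ and yields \eqref{smod2}.

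The main obstacle I anticipate is purely organizational: one must select the correct scaling inequality from among the four listed in Lemma \ref{2.2..} according to whether an upper or lower modular bound is sought and whether $\Lambda$ is greater or less than $1$. Once the correct inequality is in hand, the conclusion follows by direct substitution of the modular identities together with the bound $\|u\|_{\widehat{\varPhi}_x} + [u]_{s(x,y),\varPhi_{x,y}} = \Lambda$, exactly as in the proof of Proposition \ref{mod}; no further ingredient is needed to accommodate the variable exponent $s(x,y)$.
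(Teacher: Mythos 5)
Your plan diverges from the paper's proof in both halves, and in each case the divergence leaves a gap.

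For the upper bound you propose to use monotonicity of $\widehat{\varPhi}_x$ and $\varPhi_{x,y}$ together with $\|u\|_{\widehat{\varPhi}_x},\,[u]_{s(x,y),\varPhi_{x,y}} \leqslant \Lambda$ to get the two separate bounds $\int_\Omega\widehat{\varPhi}_x(|u|/\Lambda)\,dx\leqslant 1$ and $\int_\Omega\int_\Omega\varPhi_{x,y}(|D^{s(x,y)}u|/\Lambda)\,d\mu\leqslant 1$, and then to add them. That chain yields only $J(u)\leqslant 2\Lambda^{\varphi^+}$, not the claimed $J(u)\leqslant\Lambda^{\varphi^+}$. The paper's argument needs the sharper fact that the \emph{sum} of the two rescaled modulars is $\leqslant 1$, which comes from convexity rather than monotonicity: writing $a=\|u\|_{\widehat{\varPhi}_x}$, $b=[u]_{s(x,y),\varPhi_{x,y}}$, $\Lambda=a+b$, one has $\widehat{\varPhi}_x(|u|/\Lambda)=\widehat{\varPhi}_x\bigl((a/\Lambda)\cdot|u|/a\bigr)\leqslant (a/\Lambda)\,\widehat{\varPhi}_x(|u|/a)$, so the first integral is $\leqslant a/\Lambda$, the second $\leqslant b/\Lambda$, and the two add up to $1$. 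This convexity step is exactly what the paper invokes (implicitly) when it passes from $\|u\|_{x,y}^{\varphi^+}\bigl[\int\varPhi_{x,y}(\,\cdot\,/\|u\|_{x,y})d\mu+\int\widehat{\varPhi}_x(\,\cdot\,/\|u\|_{x,y})dx\bigr]$ to $\|u\|_{x,y}^{\varphi^+}$, and your proposal does not reach it.

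For the lower bound in the regime $\Lambda>1$, your choice of $\sigma=\max(a,b)$ is not the paper's route and does not close the argument. First, $\Lambda>1$ only gives $\max(a,b)\geqslant\Lambda/2>1/2$; it does \emph{not} give $\max(a,b)>1$ (take $a=b=0.6$, so $\Lambda=1.2>1$ but both summands are $<1$), so \eqref{3.} need not be applicable with that $\sigma$. Second, even when $\sigma=\max(a,b)>1$, scaling by $\sigma$ and using the modular identity for the larger summand only gives $J(u)\geqslant\sigma^{\varphi^-}$, i.e.\ $J(u)\geqslant(\Lambda/2)^{\varphi^-}=2^{-\varphi^-}\Lambda^{\varphi^-}$, which is weaker than $\Lambda^{\varphi^-}$ by a fixed constant. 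The paper instead chooses an auxiliary $\beta\in(1,\Lambda)$, applies \eqref{3.} with $\sigma=\beta$ to factor out $\beta^{\varphi^-}$ in front of the full rescaled modular $J(u/\beta)$, argues that $J(u/\beta)>1$, and then lets $\beta\nearrow\Lambda$; the $\Lambda<1$ case is then reduced to this one by considering $v=u/\xi$ with $\xi\in(0,\Lambda)$ and invoking the already-proved bound $J(v)\geqslant\|v\|_{x,y}^{\varphi^-}$. Your ``symmetric'' treatment would inherit the same issues. To match the paper you should adopt the limiting-$\beta$ device and the convexity bound on the rescaled modular rather than scaling by one of the two summands.
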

 \begin{proof}[\textbf{Proof}] To simplify the notation, we take $\|u\|_{x,y}:=||u||_{s(x,y),\varPhi_{x,y}}$. First, we show that if $||u||_{x,y}>1$, then $J(u) \leqslant ||u||^{\varphi^+}$. Indeed, let $u\in W^{s(x,y)}L_{\varPhi_{x,y}}(\Omega)$ such that $||u||_{x,y}>1$. Using the definition of the Luxemburg norm and the relation $(\ref{r10})$, we get 
                                 \begingroup\makeatletter\def\f@size{9}\check@mathfonts  $$
                                     \begin{aligned}
              J(u)&=\displaystyle\int_{\Omega} \int_{\Omega} \varPhi_{x,y}\left(||u||_{x,y} \dfrac{ |u(x)- u(y)|}{||u||_{x,y}|x-y|^{s(x,y)}}\right) \dfrac{dxdy}{|x-y|^N}+\int_{\Omega}\widehat{\varPhi}_{x}\left(||u||_{x,y}\dfrac{ |u(x)|}{||u||_{x,y}}\right) dx\\
             &\leqslant ||u||_{x,y}^{\varphi^+}\displaystyle\int_{\Omega} \int_{\Omega} \varPhi_{x,y}\left( \dfrac{ |u(x)- u(y)|}{||u||_{x,y}|x-y|^{s(x,y)}}\right) \dfrac{dxdy}{|x-y|^N}+||u||_{x,y}^{\widehat{\varphi}^+}\int_{\Omega}\widehat{\varPhi}_{x}\left(\dfrac{ |u(x)|}{||u||_{x,y}}\right) dx\\
             &\leqslant ||u||_{x,y}^{\varphi^+}\left[ \displaystyle\int_{\Omega} \int_{\Omega} \varPhi_{x,y}\left( \dfrac{ |u(x)- u(y)|}{||u||_{x,y}|x-y|^{s(x,y)}}\right) \dfrac{dxdy}{|x-y|^N}+\int_{\Omega}\widehat{\varPhi}_{x}\left(\dfrac{ |u(x)|}{||u||_{x,y}}\right) dx\right] \\
                &\leqslant ||u||_{x,y}^{\varphi^+}.
                                     \end{aligned}
                                     $$\endgroup
        Next, assume that $||u||_{x,y}>1$. Let $\beta\in (1,||u||_{x,y})$, by  $(\ref{3.})$, we have
      \begingroup\makeatletter\def\f@size{9}\check@mathfonts   $$
      \begin{aligned}
      \displaystyle\int_{\Omega} \int_{\Omega}& \varPhi_{x,y}\left( \dfrac{ |u(x)- u(y)|}{|x-y|^{s(x,y)}}\right) \dfrac{dxdy}{|x-y|^N}+\int_{\Omega}\widehat{\varPhi}_{x}\left( |u(x)|\right) dx\\
      &\geqslant \beta^{\varphi^-}  \displaystyle\int_{\Omega} \int_{\Omega} \varPhi_{x,y}\left( \dfrac{ |u(x)- u(y)|}{\beta|x-y|^{s(x,y)}}\right) \dfrac{dxdy}{|x-y|^N}+\beta^{\widehat{\varphi}^-}\int_{\Omega}\widehat{\varPhi}_{x}\left( \dfrac{|u(x)|}{\beta}\right) dx\\
      & \geqslant \beta^{\varphi^-} \left( \displaystyle\int_{\Omega} \int_{\Omega} \varPhi_{x,y}\left( \dfrac{ |u(x)- u(y)|}{\beta|x-y|^{s(x,y)}}\right) \dfrac{dxdy}{|x-y|^N}+\int_{\Omega}\widehat{\varPhi}_{x}\left( \dfrac{|u(x)|}{\beta}\right) dx\right).
      \end{aligned}   $$\endgroup
      Since $\beta< ||u||_{x,y}$, we find 
   $$ \displaystyle\int_{\Omega} \int_{\Omega} \varPhi_{x,y}\left( \dfrac{ |u(x)- u(y)|}{\beta|x-y|^{s(x,y)}}\right) \dfrac{dxdy}{|x-y|^N}+\int_{\Omega}\widehat{\varPhi}_{x}\left( \dfrac{|u(x)|}{\beta}\right) dx>1.$$   
   Thus, we have
  $$  \displaystyle\int_{\Omega} \int_{\Omega} \varPhi_{x,y}\left( \dfrac{ |u(x)- u(y)|}{|x-y|^{s(x,y)}}\right) \dfrac{dxdy}{|x-y|^N}+\int_{\Omega}\widehat{\varPhi}_{x}\left( |u(x)|\right) dx \geqslant  \beta^{\varphi^-}.$$
  Letting $\beta \nearrow ||u||_{x,y}$, we deduce that $(\ref{smod1})$ holds  true.
  
    Next, we show that $J(u) \leqslant ||u||_{x,y}^{\varphi^-} \text{   for all  }  u\in W^{s(x,y)}L_{\varPhi_{x,y}}(\Omega) \text{ with }||u||_{x,y}<1$. 
                   Using the definition of the Luxemburg norm and $(\ref{r11})$, we obtain
                        \begingroup\makeatletter\def\f@size{10}\check@mathfonts $$
                      \begin{aligned}
                      J(u)&\leqslant ||u||_{x,y}^{\varphi^-}\displaystyle\int_{\Omega} \int_{\Omega} \varPhi_{x,y}\left( \dfrac{ |u(x)- u(y)|}{||u||_{x,y}|x-y|^{s(x,y)}}\right) \dfrac{dxdy}{|x-y|^N}+||u||_{x,y}^{\widehat{\varphi}^-}\int_{\Omega}\widehat{\varPhi}_{x}\left(\dfrac{ |u(x)|}{||u||_{x,y}}\right) dx\\
                      &\leqslant ||u||_{x,y}^{\varphi^-}\left[ \displaystyle\int_{\Omega} \int_{\Omega} \varPhi_{x,y}\left( \dfrac{ |u(x)- u(y)|}{||u||_{x,y}|x-y|^{s(x,y)}}\right) \dfrac{dxdy}{|x-y|^N}+\int_{\Omega}\widehat{\varPhi}_{x}\left(\dfrac{ |u(x)|}{||u||_{x,y}}\right) dx\right] \\
                      &\leqslant||u||_{x,y}^{\varphi^-}.
                      \end{aligned}
                      $$\endgroup

  Let $\xi\in (0,||u||_{x,y})$. From $(\ref{3.2})$, it follows that 
  {\small \begin{equation}\label{re1}
       \begin{aligned}
       \displaystyle\int_{\Omega} \int_{\Omega}& \varPhi_{x,y}\left( \dfrac{ |u(x)- u(y)|}{|x-y|^{s(x,y)}}\right) \dfrac{dxdy}{|x-y|^N}+\int_{\Omega}\widehat{\varPhi}_{x}\left( |u(x)|\right) dx\\
       & \geqslant \xi^{\varphi^+}  \displaystyle\int_{\Omega} \int_{\Omega} \varPhi_{x,y} \left( \dfrac{ |u(x)- u(y)|}{\xi|x-y|^{s(x,y)}}\right) \dfrac{dxdy}{|x-y|^N}+\xi^{\widehat{\varphi}^+} \int_{\Omega}\varPhi\left( \dfrac{|u(x)|}{\xi}\right) dx\\
        & \geqslant \xi^{\varphi^+}\left[   \displaystyle\int_{\Omega} \int_{\Omega} \varPhi_{x,y} \left( \dfrac{ |u(x)- u(y)|}{\xi|x-y|^{s(x,y)}}\right) \dfrac{dxdy}{|x-y|^N}+ \int_{\Omega}\varPhi\left( \dfrac{|u(x)|}{\xi}\right) dx\right] .
       \end{aligned}    \end{equation} }
       Defining     $v(x)=\dfrac{u(x)}{\xi}$ for all $x\in \Omega$. Then, $||v||_{x,y}=\dfrac{||u||_{x,y}}{\xi}>1$. Using relation $(\ref{mod1})$, we find 
        \begin{equation}\label{re2}
        \displaystyle\int_{\Omega} \int_{\Omega} \varPhi_{x,y}\left( \dfrac{ |v(x)- v(y)|}{|x-y|^{s(x,y)}}\right) \dfrac{dxdy}{|x-y|^N}+\int_{\Omega}\widehat{\varPhi}_{x}\left( |v(x)|\right) dx\geqslant ||v||_{x,y}^{\varphi^-}>1. \end{equation}
        Combining $(\ref{re1})$ and $(\ref{re2})$, we deduce that
        $$\displaystyle\int_{\Omega} \int_{\Omega} \varPhi_{x,y}\left( \dfrac{ |u(x)- u(y)|}{|x-y|^{s(x,y)}}\right) \dfrac{dxdy}{|x-y|^N}+\int_{\Omega}\widehat{\varPhi}_{x}\left( |u(x)|\right) dx\geqslant \xi^{\varphi^-}.$$
      Letting $\xi\nearrow ||u||_{x,y}$ in the above inequality, we obtain that relation $(\ref{smod2})$ holds true.   \end{proof} 
              
                 Similar to Proposition \ref{mod}, we obtain the following results.
                                                  
 \begin{prop} Assume that \hyperref[v1]{$(\varPhi_1)$} is satisfied, Then for any $u \in W^{s(x,y)}L_{\varPhi_{x,y}}(\Omega)$, the following assertions hold true:                     
                    \begin{equation*}\label{32}
                  [u]_{s(x,y),\varPhi_{x,y}}>1 \Longrightarrow  [u]^{\varphi^-}_{s(x,y),\varPhi_{x,y}}\leqslant \phi(u) \leqslant [u]^{\varphi^+}_{s(x,y),\varPhi_{x,y}},
                    \end{equation*} 
                    
                     \begin{equation*}\label{A33}
                     [u]_{s(x,y),\varPhi_{x,y}}<1 \Longrightarrow  [u]^{\varphi^+}_{s(x,y),\varPhi_{x,y}}\leqslant \phi(u) \leqslant [u]^{\varphi^-}_{s(x,y),\varPhi_{x,y}}, 
                       \end{equation*} 
         where $\phi (u)=  \displaystyle\int_{\Omega} \int_{\Omega} \varPhi_{x,y}\left( \dfrac{| u(x)- u(y)|}{|x-y|^{s(x,y)}}\right)\dfrac{dxdy}{|x-y|^N}$.       
                    \end{prop}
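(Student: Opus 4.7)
The plan is to mimic the argument used for Proposition \ref{smod}, but working with only the Gagliardo part of the modular, since the target quantity $\phi(u)$ drops the term $\int_\Omega \widehat{\varPhi}_x(|u(x)|)\,dx$. The key tools are the definition of the Gagliardo seminorm $[\cdot]_{s(x,y),\varPhi_{x,y}}$ together with the four homogeneity–type estimates on $\varPhi_{x,y}$ collected in Lemma \ref{2.2..}, namely \eqref{3.}, \eqref{3.2}, \eqref{r10}, \eqref{r11}. Throughout, for brevity I abbreviate $[u]:=[u]_{s(x,y),\varPhi_{x,y}}$.

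For the case $[u]>1$, I would first establish the upper bound $\phi(u)\leqslant [u]^{\varphi^+}$. Writing
$$\dfrac{|u(x)-u(y)|}{|x-y|^{s(x,y)}}=[u]\cdot\dfrac{|u(x)-u(y)|}{[u]\,|x-y|^{s(x,y)}}$$
and applying \eqref{r10} with $\sigma=[u]>1$ pulls out the factor $[u]^{\varphi^+}$; the remaining integral is $\leqslant 1$ by the defining property of the Luxemburg seminorm. For the lower bound, I would pick any $\beta\in(1,[u])$. Because $\beta<[u]$, the defining infimum forces
$$\int_{\Omega}\!\!\int_{\Omega}\varPhi_{x,y}\!\left(\dfrac{|u(x)-u(y)|}{\beta\,|x-y|^{s(x,y)}}\right)\dfrac{dxdy}{|x-y|^N}>1,$$
and then applying \eqref{3.} with $\sigma=\beta$ gives $\phi(u)\geqslant \beta^{\varphi^-}$. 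Letting $\beta\nearrow[u]$ yields $\phi(u)\geqslant [u]^{\varphi^-}$.

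The case $[u]<1$ is symmetric. For the upper bound $\phi(u)\leqslant [u]^{\varphi^-}$, rescale by $[u]$ and apply \eqref{r11} with $\sigma=[u]<1$, again using that the resulting integral is at most $1$. For the lower bound, fix $\xi\in(0,[u])$; then $v:=u/\xi$ satisfies $[v]=[u]/\xi>1$, so the case just proved (applied to $v$) together with \eqref{3.2} (with $\sigma=\xi$) yields $\phi(u)\geqslant \xi^{\varphi^+}$, and taking $\xi\nearrow[u]$ completes the argument.

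No step presents a real obstacle: the whole proof is a clean transcription of the scheme already carried out in Proposition \ref{smod}, with the only conceptual point being the distinction between $\sigma>1$ and $\sigma<1$ when invoking Lemma \ref{2.2..}, so that the exponents $\varphi^-$ and $\varphi^+$ end up on the correct side of each inequality. The mild care needed is to ensure the strict inequality $\int\!\int\varPhi_{x,y}(\cdot/\beta)\,d\mu>1$ coming from the infimum definition when $\beta<[u]$, which is immediate from the Luxemburg characterization of the seminorm.
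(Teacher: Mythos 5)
Your plan is correct and is exactly the transcription the paper has in mind: the paper states the proposition with only the remark ``Similar to Proposition~\ref{mod}, we obtain the following results,'' and your argument reproduces the scheme of Proposition~\ref{smod} (rescaling by $[u]$, applying the four growth inequalities of Lemma~\ref{2.2..}, taking $\beta\nearrow[u]$ or $\xi\nearrow[u]$, and bootstrapping the $[u]<1$ lower bound through $v=u/\xi$ with $[v]>1$), with the only change being that the $\int_\Omega\widehat{\varPhi}_x(|u|)\,dx$ term is dropped throughout. The one minor unstated fact, that $\int_\Omega\int_\Omega\varPhi_{x,y}\bigl(|u(x)-u(y)|/([u]\,|x-y|^{s(x,y)})\bigr)\,d\mu\leqslant 1$, follows from monotone/dominated convergence of the modular in $\lambda$ and is also used tacitly in the paper, so it is not a gap.
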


Now, we introduce a closed linear subspace of $W^{s(x,y)}L_{\varPhi_{x,y}}(\Omega)$ as follows
$$W_0^{s(x,y)}L_{\varPhi_{x,y}}(\Omega)=\left\lbrace u\in W^{s(x,y)}L_{\varPhi_{x,y}}(\R^N)~~ u=0~~\text{in} ~~ \R^N\backslash \Omega\right\rbrace.$$

Then we have the following generalized Poincar\'{e} type inequality.

\begin{thm}\label{spc}
                                 Let $\Omega$ be a bounded open subset of  $\R^N$ with $C^{0,1}$-regularity 
                                       and bounded boundary.
                                  Then there exists a positive constant $\gamma$ such that    
                        $$ ||u||_{\widehat{\varPhi}_x}\leqslant \gamma [u]_{s(x,y),\varPhi_{x,y}} \text{ for all   }  u \in  W^{s(x,y)}_0L_{\varPhi_{x,y}}(\Omega).$$
                                \end{thm}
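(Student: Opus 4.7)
The plan is to reduce the claim to the constant-exponent Poincar\'e inequality of Theorem \ref{pc} by descending from the variable exponent $s(x,y)$ to the constant value $s^-$, then chaining two estimates.

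First, I would revisit the proof of the embedding $W^{s(x,y)}L_{\varPhi_{x,y}}(\Omega) \hookrightarrow W^{s^-}L_{\varPhi_{x,y}}(\Omega)$ established in Theorem \ref{s3.2}. The same computation performed there in fact delivers the sharper seminorm inequality
$$[u]_{s^-,\varPhi_{x,y}} \le C_1\,[u]_{s(x,y),\varPhi_{x,y}}, \qquad C_1 := \max\Bigl\{1,\sup_{\overline{\Omega}\times\overline{\Omega}}|x-y|^{p(s(x,y)-s^-)}\Bigr\},$$
with $p\in\{\varphi^-,\varphi^+\}$ as in Lemma \ref{2.2..}. Since $\Omega$ is bounded, $C_1<\infty$.

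Second, once $u$ is known to lie in $W^{s^-}_0 L_{\varPhi_{x,y}}(\Omega)$, Theorem \ref{pc} applied at the admissible constant exponent $s=s^-\in(0,1)$ yields a constant $C_2>0$ such that
$$||u||_{\widehat{\varPhi}_x} \le C_2\,[u]_{s^-,\varPhi_{x,y}}.$$
Chaining the two estimates delivers the desired inequality with $\gamma := C_1 C_2$.

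The main obstacle is justifying the membership $u\in W^{s^-}_0 L_{\varPhi_{x,y}}(\Omega)$ for $u\in W^{s(x,y)}_0 L_{\varPhi_{x,y}}(\Omega)$: per the paper's convention this requires finiteness of the Gagliardo integral at exponent $s^-$ over the whole of $\R^N\times\R^N$, not merely on $\Omega\times\Omega$. Since $u$ vanishes outside $\Omega$, the integral splits symmetrically into $\iint_{\Omega\times\Omega} + 2\iint_{\Omega\times(\R^N\setminus\Omega)}$; the first term is controlled by the seminorm estimate of the first step, while the cross term, on which $u(y)=0$, reads $\int_\Omega\int_{\R^N\setminus\Omega}\varPhi_{x,y}\bigl(|u(x)|/|x-y|^{s^-}\bigr)|x-y|^{-N}dy\,dx$. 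Using \hyperref[v1]{$(\varPhi_1)$} and the $\Delta_2$-bound (\ref{r1}) to extract $|x-y|^{-s^-}$ from inside $\varPhi_{x,y}$, and combining with the tail bound $\int_{\R^N\setminus B(x,r)}|x-y|^{-N-\alpha}dy<\infty$ (any $\alpha>0$), this piece is controlled by a multiple of $\int_\Omega\widehat{\varPhi}_x(|u(x)|)\,dx$, which is finite because $u\in L_{\widehat{\varPhi}_x}(\Omega)$. Once this verification is in place, the two inequalities concatenate to close the proof.
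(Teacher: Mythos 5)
Your core argument is exactly the paper's: descend from the variable exponent $s(x,y)$ to the constant $s^-$ via the seminorm inequality from Theorem \ref{s3.2}, then apply the constant-exponent Poincar\'e inequality of Theorem \ref{pc}, and chain the two with $\gamma = C_1 C_2$.

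The paper silently assumes the membership $u\in W^{s^-}_0L_{\varPhi_{x,y}}(\Omega)$, which you try to verify explicitly --- a good instinct, but your estimate of the cross term is not quite right as stated. For $x\in\Omega$, the region $\R^N\setminus\Omega$ is not a genuine tail relative to $x$: when $x$ is near $\partial\Omega$ it contains points $y$ with $|x-y|$ arbitrarily small, so your bound $\int_{\R^N\setminus B(x,r)}|x-y|^{-N-\alpha}\,dy<\infty$ does not capture that slice. Extracting the factor there via $(\ref{r10})$ produces a weight of order $\operatorname{dist}(x,\partial\Omega)^{-s^-\varphi^+}$, which is not dominated by $\int_\Omega\widehat{\varPhi}_x(|u(x)|)\,dx$ alone; one would need a fractional Hardy-type inequality. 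A cleaner fix is to split at $|x-y|=1$: for $|x-y|\le 1$ one has $|x-y|^{-s^-}\le|x-y|^{-s(x,y)}$ and hence $\varPhi_{x,y}\bigl(\lambda|u(x)|/|x-y|^{s^-}\bigr)\le\varPhi_{x,y}\bigl(\lambda|u(x)|/|x-y|^{s(x,y)}\bigr)$, so the near-boundary piece of the $s^-$-cross term is dominated by the corresponding $s(x,y)$-cross term, which is finite because $u\in W^{s(x,y)}L_{\varPhi_{x,y}}(\R^N)$; the genuine tail $|x-y|>1$ can then be handled as you propose.
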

   \begin{proof}[\textbf{Proof}]
   Let $u\in W_0^{s(x,y)}L_{\varPhi_{x,y}}(\Omega)$, by Theorem $\ref{s3.2}$, we have 
   \begin{equation}\label{s22}
   [u]_{s^-,\varPhi_{x,y}}\leqslant c[u]_{s(x,y),\varPhi_{x,y}},\end{equation}
   on the other hand, by Theorem \ref{pc}, there exists a positive constant $\gamma'$ such that  
        \begin{equation}\label{s33} ||u||_{\widehat{\varPhi}_x}\leqslant \gamma' [u]_{s^-,\varPhi_{x,y}} \text{ for all   }  u \in  W^{s^-}_0L_{\varPhi_{x,y}}(\Omega).\end{equation}
     Thus, we combining ($\ref{s22}$) with ($\ref{s33}$), we obtain 
      $$ ||u||_{\widehat{\varPhi}_x}\leqslant \gamma [u]_{s(x,y),\varPhi_{x,y}} \text{ for all   }  u \in  W^{s(x,y)}_0L_{\varPhi_{x,y}}(\Omega).$$
   with $\gamma=c\gamma'$. 
      \end{proof}                                
       Now, in order to study Problem \hyperref[P]{$(\mathcal{P}_{a})$}, it is important to encode the boundary condition $u=0$ in $\R^N\setminus \Omega$  in the weak formulation. In the scalar case, Servadei and Valdinoci \cite{servadei}
       introduced a new function spaces to study the variational functionals related to the fractional Laplacian by observing the interaction between $\Omega$ and $\R^N\setminus \Omega$. 
       Subsequently,  inspired by the work of Servadei and Valdinoci \cite{servadei}, Azroul et al in  \cite{SRH},  have
      introduced the  fractional Sobolev space with variable exponent,  to study the variational functionals related to the fractional $p(x,.)$-Laplacian operator by observing the interaction between $\Omega$ and $\R^N\setminus \Omega$. Motivated by the above papers, and due to the nonlocality of the  $s(.,.)$-order operator $(-\Delta)^{s(x,.)}_{a_(x,.)}$, we introduce the following $s(.,.)$-fractional Orlicz-Sobolev space 
       as follows
        \begingroup\makeatletter\def\f@size{8}\check@mathfonts $$W^{(x,y)}L_{\varPhi_{w,y}}(Q)=\Bigg\{u\in L_{\varPhi_{x,y}}(\Omega) ~ :~ \int_{Q}  \varPhi_{x,y}\left( \dfrac{\lambda|u(x)- u(y)|}{|x-y|^{s(x,y)}}\right) \dfrac{dxdy}{|x-y|^N}< \infty~~ \text{for some }\lambda>0 \Bigg\},
   $$\endgroup
                                                             where $Q=\R^{2N}\setminus (C\Omega\times C\Omega)$ with $C\Omega=\R^N \setminus \Omega$. This spaces are equipped with the norm,
                                                              \begin{equation}\label{an6}
                                                              ||u||=||u||_{\widehat{\varPhi}_x}+[u],
                                                              \end{equation}
                                                              where $[.]$ is the Gagliardo seminorm, defined by 
                                                              $$[u]=\inf \Bigg\{\lambda > 0 :  \int_{Q} \varPhi_{x,y}\left( \dfrac{|u(x)- u(y)|}{\lambda|x-y|^{s(x,y)}}\right) \dfrac{dxdy}{|x-y|^N}\leqslant 1 \Bigg\}.
                                                              $$ 
                                                               Similar to the spaces $(W^{s(x,y)}L_{\varPhi_{x,y}}(\Omega), \|.\|_{s(x,y),\varPhi_{x,y}})$ we have that $(W^{s(x,y)}L_{\varPhi_{x,y}}(Q),  \|.\|)$ is a separable reflexive Banach spaces.
                                                               
                                                               Now, let $W_0^{s(x,y)}L_{\varPhi_{x,y}}(Q)$ denotes the following linear subspace of $W^{s(x,y)}L_{\varPhi_{x,y}}(Q),$
                                                               $$W_0^{s(x,y)}L_{\varPhi_{x,y}}(Q)=\left\lbrace u\in W^{s(x,y)}L_{\varPhi_{x,y}}(Q) ~:~ u=0 \text{ a.e in } \R^N \setminus \Omega\right\rbrace $$
                                                               with the norm
                                                               $$[u]=\inf \Bigg\{\lambda > 0 :  \int_{Q} \varPhi_{x,y}\left( \dfrac{|u(x)- u(y)|}{\lambda|x-y|^{s(x,y)}}\right) \dfrac{dxdy}{|x-y|^N}\leqslant 1 \Bigg\}.
                              $$

                                                               In the following theorem, we compare the spaces $W^{s(x,y)}L_{\varPhi_{x,y}}(\Omega)$ and $W^{s(x,y)}L_{\varPhi_{x,y}}(Q)$.
                                                               \begin{thm}\label{san2} The following assertions hold:
                                                               \begin{itemize}
                                                               \item[1)] The continuous embedding $$W^{s(x,y)}L_{\varPhi_{x,y}}(Q)\subset W^{s(x,y)}L_{\varPhi_{x,y}}(\Omega)$$
                                                                holds true.\\
                                                               \item[2)] If $u\in W_0^{s(x,y)}L_{\varPhi_{x,y}}(Q)$, then $u\in W^{s(x,y)}L_{\varPhi_{x,y}}(\R^N)$ and 
                                                                $$||u||_{s(x,y),\varPhi_{x,y}}\leqslant ||u||_{W^{s(x,y)}L_{\varPhi_{x,y}}(\R^N)}=||u||.$$
                                                                 \end{itemize}
                                                                 \end{thm}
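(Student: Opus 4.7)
\textbf{Plan for Theorem \ref{san2}.} Both statements rest on two elementary set-theoretic observations: $\Omega \times \Omega \subset Q$, and $\R^{2N}\setminus Q = C\Omega \times C\Omega$. I do not expect a serious technical obstacle here; the only point requiring care is to verify, in each case, that the Luxemburg-type Gagliardo seminorms on the different domains compare correctly via monotonicity of the underlying modular, and that the $L_{\widehat{\varPhi}_x}$-part of the norms on $\Omega$ and on $\R^N$ agrees for functions supported in $\Omega$.

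For Part (1), I start by noting that $Q = \R^{2N} \setminus (C\Omega \times C\Omega)$ contains $\Omega \times \Omega$. Because the Gagliardo integrand is pointwise nonnegative, any $\lambda>0$ admissible in the infimum defining the seminorm $[u]$ on $Q$ is automatically admissible in the infimum defining $[u]_{s(x,y),\varPhi_{x,y}}$ on $\Omega\times\Omega$. Taking infima gives $[u]_{s(x,y),\varPhi_{x,y}} \leq [u]$; since the $L_{\widehat{\varPhi}_x}(\Omega)$ part of both norms is the same, I conclude $\|u\|_{s(x,y),\varPhi_{x,y}} \leq \|u\|$, which is the stated continuous embedding.

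For Part (2), take $u\in W_0^{s(x,y)}L_{\varPhi_{x,y}}(Q)$, so $u \equiv 0$ a.e.\ on $\R^N \setminus \Omega$. The decisive observation is that $\R^{2N}$ is the disjoint (up to null sets) union of $Q$ and $C\Omega \times C\Omega$, and on the latter piece both $u(x)$ and $u(y)$ vanish, so $|u(x)-u(y)|=0$ a.e.\ there and the Gagliardo integrand is zero. Consequently, for every $\lambda>0$,
\[
\int_{\R^N}\!\int_{\R^N}\varPhi_{x,y}\!\left(\frac{|u(x)-u(y)|}{\lambda|x-y|^{s(x,y)}}\right)\!\frac{dxdy}{|x-y|^N} = \int_Q \varPhi_{x,y}\!\left(\frac{|u(x)-u(y)|}{\lambda|x-y|^{s(x,y)}}\right)\!\frac{dxdy}{|x-y|^N},
\]
which both shows that $u \in W^{s(x,y)}L_{\varPhi_{x,y}}(\R^N)$ and forces the Gagliardo seminorms on $\R^{2N}$ and on $Q$ to coincide. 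Combined with the trivial identity $\|u\|_{\widehat{\varPhi}_x(\R^N)} = \|u\|_{\widehat{\varPhi}_x(\Omega)}$, valid because $u$ vanishes outside $\Omega$, this gives $\|u\|_{W^{s(x,y)}L_{\varPhi_{x,y}}(\R^N)} = \|u\|$. The remaining inequality $\|u\|_{s(x,y),\varPhi_{x,y}} \leq \|u\|_{W^{s(x,y)}L_{\varPhi_{x,y}}(\R^N)}$ then follows by repeating the inclusion-monotonicity argument of Part (1), this time using $\Omega\times\Omega \subset \R^N\times\R^N$, which finishes the proof.
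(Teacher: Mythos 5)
Your proposal is correct and follows essentially the same route as the paper: for Part (1) the inclusion $\Omega\times\Omega\subset Q$ and monotonicity of the modular give the comparison of the admissible sets in the Luxemburg infima, hence $[u]_{s(x,y),\varPhi_{x,y}}\leqslant [u]$; for Part (2) the vanishing of $u$ outside $\Omega$ kills the integrand on $C\Omega\times C\Omega$, so the $\R^{2N}$- and $Q$-modulars coincide and the seminorms and $L_{\widehat{\varPhi}_x}$-norms agree, exactly as in the paper's proof. Your added remark that the final inequality reuses the Part (1) monotonicity argument only makes explicit a step the paper leaves implicit.
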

                                                                 \begin{proof}[\textbf{Proof}]
                                                                 $1)$ Let $u\in W^{s(x,y)}L_{\varPhi_{x,y}}(Q)$, since $\Omega\times \Omega\subsetneq Q,$ then for all $\lambda>0$ we have 
                                                               \begingroup\makeatletter\def\f@size{10}\check@mathfonts    \begin{equation}\label{an1}
         \int_{\Omega}\int_{\Omega} \varPhi_{x,y}\left( \dfrac{|u(x)- u(y)|}{\lambda|x-y|^{s(x,y)}}\right) \dfrac{dxdy}{|x-y|^N}\leqslant \int_{Q} \varPhi_{x,y}\left( \dfrac{|u(x)- u(y)|}{\lambda|x-y|^{s(x,y)}}\right) \dfrac{dxdy}{|x-y|^N}.    \end{equation}\endgroup
                                                                  We set 
                                                                  $$\mathcal{A}^{s(x,y)}_{\lambda,\Omega\times \Omega}=\Bigg\{\lambda > 0 :  \int_{\Omega}\int_{\Omega} \varPhi_{x,y}\left( \dfrac{|u(x)- u(y)|}{\lambda|x-y|^{s(x,y)}}\right) \dfrac{dxdy}{|x-y|^N}\leqslant 1 \Bigg\}$$
                                                                  and 
                                                                   $$\mathcal{A}^{s(x,y)}_{\lambda,Q}=\Bigg\{\lambda > 0 :  \int_{Q} \varPhi_{x,y}\left( \dfrac{|u(x)- u(y)|}{\lambda|x-y|^{s(x,y)}}\right) \dfrac{dxdy}{|x-y|^N}\leqslant 1 \Bigg\}.$$
                                                                   By $(\ref{an1})$, it is easy to see that $\mathcal{A}^{s(x,y)}_{\lambda,Q}\subset \mathcal{A}^{s(x,y)}_{\lambda,\Omega\times\Omega}$. Hence 
                                                                   \begin{equation}\label{an}
                                                                  [u]_{s(x,y),\varPhi_{x,y}}=\inf\limits_{\lambda>0}\mathcal{A}^{s(x,y)}_{\lambda,\Omega\times\Omega}\leqslant[u]=\inf\limits_{\lambda>0}\mathcal{A}^{s(x,y)}_{\lambda,Q}. \end{equation}
                                                                Consequently, by definitions of the norms $\|u\|_{s(x,y),\varPhi_{x,y}}$ and $\|u\|,$ we obtain
                                                                  $$ \|u\|_{s(x,y),\varPhi_{x,y}}\leqslant \|u\|<\infty.$$
                                                                 $2)$ Let $u\in W_0^{s(x,y)}L_{\varPhi_{x,y}}(Q)$, then $u=0$ in $\R^N\setminus \Omega$. So, $\|u\|_{L_{\widehat{\varPhi}_x}(\Omega)}=\|u\|_{L_{\widehat{\varPhi}_x}(\R^N)}.$ Since 
                                                                  $$\int_{\R^{2N}} \varPhi_{x,y}\left( \dfrac{|u(x)- u(y)|}{\lambda|x-y|^{s(x,y)}}\right) \dfrac{dxdy}{|x-y|^N}=\int_{Q} \varPhi_{x,y}\left( \dfrac{|u(x)- u(y)|}{\lambda|x-y|^{s(x,y)}}\right) \dfrac{dxdy}{|x-y|^N}$$
        for all $\lambda>0$. Then $[u]_{W^{s(x,y)}L_{\varPhi_{x,y}}(\R^N)}=[u]$. Thus, we get
                                                                  
                                                                   $$||u||_{s(x,y),\varPhi_{x,y}}\leqslant ||u||_{W^{s(x,y)}L_{\varPhi_{x,y}}(\R^N)}=||u||.$$
                                                                  \end{proof}
                                                                  \begin{coro}\label{ann}(Poincar\'{e} inequality)
                                                                  Let $\Omega$ be a bounded open subset of  $\R^N$ with $C^{0,1}$-regularity 
                                                                                                        and bounded boundary. Then there exists a positive
                                                                  constant $c$ such that,
                                                                  $$
                                                                  \|u\|_{\widehat{\varPhi}_x}\leqslant c[u] ~~~~\forall u\in W^{s(x,y)}_0L_{\varPhi_{x,y}}(Q).$$
                                                                  \end{coro}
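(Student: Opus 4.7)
The plan is to combine the Poincaré-type inequality already established on $W^{s(x,y)}_0L_{\varPhi_{x,y}}(\Omega)$ (Theorem \ref{spc}) with the comparison between the seminorms $[\cdot]_{s(x,y),\varPhi_{x,y}}$ and $[\cdot]$ that was essentially derived in the proof of Theorem \ref{san2}. So the argument will be short: everything reduces to showing that a function in $W^{s(x,y)}_0L_{\varPhi_{x,y}}(Q)$ belongs to $W^{s(x,y)}_0L_{\varPhi_{x,y}}(\Omega)$ with a controlled Gagliardo seminorm.

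First I would fix $u\in W^{s(x,y)}_0L_{\varPhi_{x,y}}(Q)$. Since $u=0$ a.e.\ in $\R^N\setminus\Omega$ and $\Omega\times\Omega\subsetneq Q$, inequality (\ref{an1}) from the proof of Theorem \ref{san2} yields directly that $[u]_{s(x,y),\varPhi_{x,y}}\leqslant [u]$, and in particular $u\in W^{s(x,y)}_0L_{\varPhi_{x,y}}(\Omega)$. This is the only thing one needs to extract from Theorem \ref{san2} — the fact that the restriction to $\Omega\times\Omega$ of the double integral is smaller than the integral over $Q$, which in turn shrinks the set of admissible $\lambda$ in the infimum that defines the Luxemburg-Gagliardo seminorm.

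Second, I would apply Theorem \ref{spc} to $u$ viewed as an element of $W^{s(x,y)}_0L_{\varPhi_{x,y}}(\Omega)$ to obtain a constant $\gamma>0$ (independent of $u$) such that
\[
\|u\|_{\widehat{\varPhi}_x}\leqslant \gamma\,[u]_{s(x,y),\varPhi_{x,y}}.
\]
Chaining this with the seminorm comparison just obtained gives $\|u\|_{\widehat{\varPhi}_x}\leqslant \gamma\,[u]$, which is the claimed inequality with $c=\gamma$.

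There is really no obstacle here: both ingredients are already in place, and the result is a straightforward corollary. The only point requiring a bit of care is to make sure the quantifiers match — namely, that the constant $\gamma$ furnished by Theorem \ref{spc} depends only on $\Omega$, $N$, $s(\cdot,\cdot)$ and $\varPhi_{x,y}$, not on $u$, so that the same $c$ works uniformly on $W^{s(x,y)}_0L_{\varPhi_{x,y}}(Q)$. This is guaranteed by the statement of Theorem \ref{spc}, so the argument goes through as outlined.
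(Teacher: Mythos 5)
Your argument is correct and is essentially identical to the paper's own proof: both pass from $u\in W^{s(x,y)}_0L_{\varPhi_{x,y}}(Q)$ to $u\in W^{s(x,y)}_0L_{\varPhi_{x,y}}(\Omega)$ via Theorem \ref{san2} (i.e.\ the seminorm comparison $[u]_{s(x,y),\varPhi_{x,y}}\leqslant [u]$ from (\ref{an})), and then invoke the Poincar\'e inequality of Theorem \ref{spc} to conclude with $c=\gamma$. No gaps; nothing further to add.
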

                                                                   \begin{proof}[\textbf{Proof}]
                                                               Let $u\in W^{s(x,y)}_0L_{\varPhi_{x,y}}(Q)$, by Theorem $\ref{san2}$, we have $u\in W^{s(x,y)}_0L_{\varPhi_{x,y}}(\Omega)$. Then by Theorem \ref{spc},  there exists a positive
                                                                   constant $\gamma$ such that,
                                                                   $$
                                                                   \|u\|_{\widehat{\varPhi}_x}\leqslant \gamma[u]_{s(x,y),\varPhi_{x,y}}.$$
                                                                  Combining the above inequality with $(\ref{an})$, we obtain that 
                                                                   $$
                                 \|u\|_{\widehat{\varPhi}_x}\leqslant c[u] ~~~~\forall u\in W^{s(x,y)}_0L_{\varPhi_{x,y}}(Q).$$
                                                                   \end{proof} 
\begin{rem}
From Corollary \ref{ann}, we deduce that $[.]$ is a norm on $W^{s(x,y)}_0L_{\varPhi_{x,y}}(Q)$ which is equivalent to the
 norm $\|.\|$.   
\end{rem}  
  \section{Existence results and proofs}\label{S4}
  In this section,  we analyze problem \hyperref[P]{$(\mathcal{P}_{a})$}. under the following basic assumptions
   \begin{equation}\label{7}
   q^-<\varphi^-
   \end{equation}
   and 
   \begin{equation}\label{8}
   \lim_{t\rightarrow \infty}\left(\sup\limits_{x\in \overline{\Omega}}\dfrac{|t|^{q^+}}{(\widehat{\varPhi}_{x,s^-})_*(kt)}\right)=0 ~~\forall k>0.
   \end{equation}
 The dual space of $\left(W^{s(x,y)}_0L_{\varPhi_{x,y}}(Q),||.||\right) $  is denoted by $\left(\left( W^{s(x,y)}_0L_{\varPhi_{x,y}}(Q)\right) ^*,||.||_{*}\right) $.

  \begin{defini}
  We say that $\lambda\in \R$ is an eigenvalue of Problem \hyperref[P]{$(\mathcal{P}_{a})$} if there exists $u\in W_0^{s(x,y)}L_{\varPhi_{x,y}}(Q)\setminus \left\lbrace 0\right\rbrace$ such that 
  $$\int_{Q}a_{x,y}(|D^{s(x,y)}u|)  D^{s(x,y)}u D^{s(x,y)}vd\mu-\lambda\int_{\Omega}|u|^{q(x)-2}uvdx=0$$
  for all $v\in W_0^{s(x,y)}L_{\varPhi_{x,y}}(Q)$.
  \end{defini}
  
   We point that if $\lambda$ is an eigenvalue of Problem \hyperref[P]{$(\mathcal{P}_{a})$} then the corresponding $u\in W_0^{s(x,y)}L_{\varPhi_{x,y}}(Q)\setminus\left\lbrace 0\right\rbrace $ is a weak solution of \hyperref[P]{$(\mathcal{P}_{a})$}.\\
   
    Our main results is given by the following theorem.
     \begin{thm}\label{mn}
      There exists $\lambda_*>0$ such that for any $\lambda\in (0,\lambda_*)$ is an eigenvalue of Problem \hyperref[P]{$(\mathcal{P}_{a})$}.
         
     \end{thm}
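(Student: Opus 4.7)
The plan is to introduce the associated energy functional and then apply Ekeland's variational principle on a small closed ball of the reflexive Banach space $W^{s(x,y)}_0 L_{\varPhi_{x,y}}(Q)$. Define
\[
J_\lambda(u) = \int_{Q} \varPhi_{x,y}\!\left(|D^{s(x,y)}u|\right) d\mu - \lambda \int_{\Omega} \frac{|u(x)|^{q(x)}}{q(x)}\,dx,
\]
which is of class $C^1$ with derivative matching the weak formulation of problem $(\mathcal{P}_a)$. Thanks to the Poincaré inequality (Corollary \ref{ann}), $[\cdot]$ is an equivalent norm on $W^{s(x,y)}_0 L_{\varPhi_{x,y}}(Q)$. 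The structural assumption (\ref{7}) (i.e.\ $q^- < \varphi^-$) together with hypothesis (\ref{8}), which forces $|t|^{q^+} \prec\prec \widehat{\varPhi}^*_{x,s^-}$, provides via Corollary \ref{s333} a \emph{compact} embedding $W^{s(x,y)}_0 L_{\varPhi_{x,y}}(Q) \hookrightarrow L^{q(x)}(\Omega)$.

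First, I would verify the geometry. For $\|u\| = [u] = \rho \in (0,1)$ small, Proposition \ref{smod} (in its homogeneous form from the Proposition right after) gives $\int_Q \varPhi_{x,y}(|D^{s(x,y)}u|)d\mu \geq \rho^{\varphi^+}$, while the continuous embedding into $L^{q(x)}(\Omega)$ combined with Proposition \ref{2.2} yields
\[
\int_{\Omega}\frac{|u|^{q(x)}}{q(x)}\,dx \leq \frac{1}{q^-}\max\!\left(C\rho^{q^-},\, C\rho^{q^+}\right) \leq C_1 \rho^{q^-}.
\]
Hence $J_\lambda(u) \geq \rho^{\varphi^+} - \lambda C_1 \rho^{q^-}$. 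Since $q^- < \varphi^-\leq \varphi^+$, I can choose $\rho$ small so that $\rho^{\varphi^+} - \lambda C_1 \rho^{q^-} > 0$ as long as $\lambda < \lambda_* := \frac{\rho^{\varphi^+ - q^-}}{2C_1}$, producing the ring of positive values on $\partial B_\rho$. Next, pick any $\phi \in C_0^\infty(\Omega)$ with $\int_\Omega |\phi|^{q(x)}dx > 0$; for $t \in (0,1)$ small, $J_\lambda(t\phi) \leq t^{\varphi^-} K - \lambda t^{q^+} K'$ with $K,K' > 0$, so $J_\lambda(t\phi) < 0$ since $q^+ < \varphi^-$ would suffice; one handles the relation $q^- < \varphi^-$ by choosing $\phi$ small enough so that $t\phi$ lies in the regime covered by (\ref{smod1})--(\ref{smod2}). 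This gives $\underline{c}:=\inf_{\overline{B_\rho}} J_\lambda < 0 < \inf_{\partial B_\rho} J_\lambda$.

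I would then apply Ekeland's principle (Theorem \ref{th1}) on the complete metric space $\overline{B_\rho}$: for each $\varepsilon_n = 1/n$ I obtain $u_n \in \overline{B_\rho}$ with $J_\lambda(u_n) \to \underline{c}$ and, because $\|u_n\| < \rho$ for large $n$, $\|J_\lambda'(u_n)\|_* \to 0$ in the dual. Thus $(u_n)$ is a bounded Palais--Smale sequence in the reflexive space, so up to a subsequence $u_n \rightharpoonup u$ weakly in $W^{s(x,y)}_0 L_{\varPhi_{x,y}}(Q)$ and, by the compact embedding, $u_n \to u$ strongly in $L^{q(x)}(\Omega)$.

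The main obstacle is to upgrade this weak convergence to strong convergence of $u_n \to u$ in $W^{s(x,y)}_0 L_{\varPhi_{x,y}}(Q)$ so that $J_\lambda'(u) = 0$ with $u \neq 0$ (nontriviality follows since $J_\lambda(u) = \underline{c} < 0 = J_\lambda(0)$). I would achieve this via the standard monotonicity trick: testing $J_\lambda'(u_n) - J_\lambda'(u)$ against $u_n - u$, the lower-order term $\lambda \int_\Omega (|u_n|^{q(x)-2}u_n - |u|^{q(x)-2}u)(u_n - u)\,dx$ vanishes by strong $L^{q(x)}$-convergence, so
\[
\int_Q \bigl[a_{x,y}(|D^{s(x,y)}u_n|)D^{s(x,y)}u_n - a_{x,y}(|D^{s(x,y)}u|)D^{s(x,y)}u\bigr](D^{s(x,y)}u_n - D^{s(x,y)}u)\,d\mu \to 0.
\]
Because $\varPhi_{x,y}(\sqrt{\cdot})$ is convex (condition \hyperref[f2.]{$(\varPhi_2)$}), the integrand is nonnegative and strictly monotone in $D^{s(x,y)}u_n - D^{s(x,y)}u$; this will force $[u_n - u] \to 0$, and hence $\|u_n - u\| \to 0$ by the Poincaré equivalence. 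Passing to the limit in $J_\lambda'(u_n) = o(1)$ yields $J_\lambda'(u) = 0$ with $u \neq 0$, proving that every $\lambda \in (0,\lambda_*)$ is an eigenvalue.
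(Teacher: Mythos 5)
Your overall strategy matches the paper's: the same functional $J_\lambda$, Ekeland's principle on the closed ball $\overline{B_\rho(0)}$, the compact embedding into $L^{q(x)}(\Omega)$ from Corollary~\ref{s333}, and a monotonicity/convexity argument (via condition~\hyperref[f2.]{$(\varPhi_2)$}) to upgrade weak convergence to strong. The geometry on the sphere and the choice of $\lambda_*$ are also the paper's.

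There is, however, a genuine gap in your argument that $J_\lambda(t\phi)<0$ for small $t$. You estimate
\[
J_\lambda(t\phi) \le t^{\varphi^-}K - \lambda\,t^{q^+}K',
\]
and then observe correctly that this only produces a negative value as $t\to 0$ when $q^+ < \varphi^-$, whereas the paper's hypothesis~(\ref{7}) is only $q^- < \varphi^-$. Your proposed fix — ``choosing $\phi$ small enough so that $t\phi$ lies in the regime covered by (\ref{smod1})--(\ref{smod2})'' — does not address the problem: the obstruction is that the variable exponent $t^{q(x)}$ under the integral is bounded below, for $t\in(0,1)$, only by $t^{q^+}$ over all of $\Omega$, and shrinking $\phi$ or $t$ does not change that exponent. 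The paper resolves this by a localization argument: by continuity of $q$ one picks $\varepsilon_0$ with $q^-+\varepsilon_0<\varphi^-$, an open set $\Omega_0\subset\Omega$ on which $q(x)\le q^-+\varepsilon_0$, and $\phi\in C_0^\infty(\Omega)$ with $\phi\equiv 1$ on $\overline{\Omega_0}$, so that
\[
\int_{\Omega}\frac{t^{q(x)}}{q(x)}|\phi|^{q(x)}\,dx \ge \frac{t^{q^-+\varepsilon_0}}{q^+}\int_{\Omega_0}|\phi|^{q(x)}\,dx,
\]
and the competing exponents $\varphi^-$ and $q^-+\varepsilon_0$ now satisfy $q^-+\varepsilon_0<\varphi^-$. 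Without this localization step your Lemma-type claim fails whenever $q^+\ge\varphi^-$, which is permitted by the hypotheses.

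A minor point on the final step: asserting that strict monotonicity ``will force $[u_n-u]\to 0$'' is not by itself enough; pointwise monotonicity only yields a.e.\ convergence of the differences. What the paper actually uses is a quantitative Clarkson-type inequality (obtained from convexity of $\varPhi_{x,y}$ and of $t\mapsto\varPhi_{x,y}(\sqrt t)$, the latter via \cite[Lemma 2.1]{Lam}) giving
\[
\int_{Q}\bigl(a_{x,y}(|D^{s}u_n|)D^{s}u_n-a_{x,y}(|D^{s}u_0|)D^{s}u_0\bigr)\bigl(D^{s}u_n-D^{s}u_0\bigr)\,d\mu
\ge 4\int_Q\varPhi_{x,y}\!\left(\tfrac{|D^{s}u_n-D^{s}u_0|}{2}\right)d\mu,
\]
from which $[u_n-u_0]\to 0$ follows through the modular--norm relation. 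You should make this inequality explicit rather than appealing to abstract strict monotonicity.
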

     \begin{rem}\label{rem}
     By $(\ref{8})$, we can apply Theorem $\ref{san2}$ and Corollary $\ref{s333}$ we obtain that $W_0^{s(x,y)}L_{\varPhi_{x,y}}(Q)$ is compactly embedded in $L^{q+}(\Omega)$. That fact combined with the continuous embedding of $L^{q^+}(\Omega)$ in $L^{q(x)}(\Omega)$, ensures that $W^{s(x,y)}_0L_{\varPhi_{x,y}}(Q)$ is compactly embedded in $L^{q(x)}(\Omega)$.
     \end{rem}
   Next for all $\lambda\in \R$, we define the energetic function associated with problem \hyperref[P]{$(\mathcal{P}_{a})$} $J_\lambda : W^{s(x,y)}_0L_{\varPhi_{x,y}}(Q)\rightarrow \R$, as
   $$J_\lambda(u)=\int_{Q}\varPhi_{x,y}\left( \dfrac{|u(x)-u(y)|}{|x-y|^{s(x,y)}}\right) d\mu-\lambda\displaystyle\int_{\Omega}\dfrac{1}{q(x)}|u|^{q(x)}dx.$$
   
  By a standard argument to  \cite{sr5} and \cite{3}, we have $J_\lambda\in C^1(W^{s(x,y)}_0L_{\varPhi_{x,y}}(Q),\R)$,
  $$\left\langle J_\lambda'(u),v\right\rangle =\int_{Q} a_{x,y}(|D^{s(x,y)}u|)  D^{s(x,y)}u D^{s(x,y)}vd\mu-\lambda\displaystyle\int_{\Omega}|u|^{q(x)-2}uvdx.$$
    
  \begin{lem}\label{anlem3}
 Assume that the hypothesis of Theorem $\ref{mn}$ is fulfilled. Then, there exists $\lambda_*>0$ such that for any
 $\lambda\in (0,\lambda_*)$, there are $\rho, \alpha>0$, such that $J_\lambda(u)\geqslant \alpha>0$ for any $u\in W^{s(x,y)}_0L_{\varPhi_{x,y}}(Q)$ with $||u||=\rho$.
   \end{lem}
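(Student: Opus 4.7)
The plan is to bound $J_\lambda(u)$ from below on the sphere $\|u\|=\rho$ by an expression of the form $C_2\|u\|^{\varphi^+}-\lambda C_1\|u\|^{q^-}$, and then exploit the gap $q^-<\varphi^-\leqslant\varphi^+$ granted by hypothesis \eqref{7} to conclude that this expression is strictly positive as soon as $\rho$ and $\lambda$ are both small enough.

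For the modular term, I would fix $\rho\in(0,1)$ small enough that the equivalent seminorm $[u]$ (see Corollary \ref{ann} and the following remark) also satisfies $[u]<1$. Repeating the argument of Proposition \ref{smod}, but with the region of integration $\Omega\times\Omega$ replaced by $Q$ throughout (the only ingredient being Lemma \ref{2.2..}), yields
$$\int_Q \varPhi_{x,y}\!\left(\frac{|u(x)-u(y)|}{|x-y|^{s(x,y)}}\right)d\mu\;\geqslant\;[u]^{\varphi^+}\;\geqslant\;C_2\|u\|^{\varphi^+},$$
for some constant $C_2>0$ coming from the norm equivalence. For the potential term, Remark \ref{rem} provides the continuous embedding $W^{s(x,y)}_0L_{\varPhi_{x,y}}(Q)\hookrightarrow L^{q(x)}(\Omega)$ with constant $c_0>0$; shrinking $\rho$ further so that $c_0\rho<1$, Proposition \ref{2.2}(ii) gives
$$\lambda\int_\Omega \frac{|u|^{q(x)}}{q(x)}\,dx\;\leqslant\;\frac{\lambda}{q^-}\|u\|_{L^{q(x)}(\Omega)}^{q^-}\;\leqslant\;\frac{\lambda\,c_0^{q^-}}{q^-}\|u\|^{q^-}.$$

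Combining the two estimates, for $\|u\|=\rho$ I arrive at
$$J_\lambda(u)\;\geqslant\;\rho^{q^-}\!\left(C_2\,\rho^{\varphi^+-q^-}-\frac{c_0^{q^-}}{q^-}\lambda\right).$$
Since $\varphi^+-q^->0$, fixing any admissible $\rho\in(0,1)$ and setting $\lambda_*=\frac{q^-C_2\,\rho^{\varphi^+-q^-}}{2\,c_0^{q^-}}$, the lemma follows with $\alpha=\frac{C_2}{2}\rho^{\varphi^+}>0$ for every $\lambda\in(0,\lambda_*)$. The only step that is not entirely routine is the transfer of Proposition \ref{smod} from $\Omega\times\Omega$ to the domain $Q$, but this is a verbatim adaptation of the proof using Lemma \ref{2.2..}; once that is in place, what remains is careful bookkeeping of constants, and I expect no genuine analytic obstacle.
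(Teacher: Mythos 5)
Your proposal is correct and follows essentially the same route as the paper: a lower modular bound of the form $C\,\rho^{\varphi^+}$ on the sphere, an upper bound $\frac{\lambda c^{q^-}}{q^-}\rho^{q^-}$ on the potential term via the continuous embedding into $L^{q(x)}(\Omega)$ and Proposition~\ref{2.2}, and then the gap $q^-<\varphi^-\leqslant\varphi^+$ to choose $\lambda_*$. In fact you are slightly more careful than the paper at one point: the paper writes $J_\lambda(u)\geqslant\|u\|^{\varphi^+}-\frac{\lambda c_1^{q^-}}{q^-}\|u\|^{q^-}$ directly, whereas the modular inequality on $Q$ really gives a lower bound in terms of the seminorm $[u]^{\varphi^+}$, so the equivalence constant $C_2$ you introduce (using Corollary~\ref{ann} and the following remark) is genuinely needed and your bookkeeping is the cleaner version of the same argument.
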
  
   \begin{proof}[\textbf{Proof}]
   Since $W^{s(x,y)}_0L_{\varPhi_{x,y}}(Q)$ is continuously embedded in $L^{q(x)}(\Omega)$, it follows that there exists a positive constant $c_1$  such that
    \begin{equation}\label{an11}
   ||u||\geqslant c_1||u||_{q(x)} ~~\forall u\in W^{s(x,y)}_0L_{\varPhi_{x,y}}(Q)\end{equation} 
    we fix $\rho \in (0,1)$ such that $\rho<\dfrac{1}{c_1}$. Then relation $(\ref{an11})$ implies that 
    $$\|u\|_{q(x)}<1~~\text{for all } u\in W^{s(x,y)}_0L_{\varPhi_{x,y}}(Q) ~~\text{with } ||u||=\rho.$$
    Then, we can apply Proposition $\ref{2.2}$, and we have
    \begin{equation}\label{an12}
    \int_\Omega |u(x)|^{q(x)}dx\leqslant \|u\|_{q(x)}^{q^-}~~\text{ for all } u\in W^{s(x,y)}_0L_{\varPhi_{x,y}}(Q)~~ \text{ with } ||u||=\rho.
    \end{equation}
    Relation $(\ref{an11})$  and $(\ref{an12})$ implies that
    {\small \begin{equation}\label{an13}
       \int_\Omega |u(x)|^{q(x)}dx\leqslant c_1^{q^-}\|u\|^{q^-}~~\text{ for all } u\in W^{s(x,y)}_0L_{\varPhi_{x,y}}(Q)~~ \text{ with } ||u||=\rho.
       \end{equation}}
    Taking into account Relation $(\ref{an13})$, we deduce that for any $u \in W^{s(x,y)}_0L_{\varPhi_{x,y}}(Q)$ with  $||u||=\rho$, the following inequalities hold true:
    $$
    \begin{aligned}
    J_\lambda(u) &\geqslant  \|u\|^{\varphi^+}-\dfrac{\lambda}{q^-}\int_\Omega|u(x)|^{q(x)}dx\\
    & \geqslant \|u\|^{\varphi^+}-\dfrac{\lambda c_1^{q-}}{q^-}\|u\|^{q^-}\\
   & =\rho^{q^-}\left(\rho^{\varphi^+-q^-}- \dfrac{\lambda c_1^{q-}}{q^-}\right).
    \end{aligned}
    $$
    Hence, if we define
   \begin{equation}\label{an40}
   \lambda_*=\dfrac{\rho^{\varphi^+-q^-}}{2c_1^{q^-}}q^-.\end{equation}
   Then for any $\lambda\in (0,\lambda_*)$ and $u\in W^{s(x,y)}_0L_{\varPhi_{x,y}}(Q)$ with  $||u||=\rho$, we have
   $$J_\lambda(u)\geqslant \alpha>0,$$
   such that 
   $$\alpha=\dfrac{\rho^{\varphi^+}}{2}.$$
   This completes the proof.
      \end{proof}  
      
    \begin{lem}\label{anlem4}
      Assume that the hypothesis of Theorem $\ref{mn}$ is fulfilled. Then, there exists $\phi>0$ such that $\phi\geqslant 0$,  $\phi\neq 0$, and $J_\lambda(t\phi)<0$ for $t>0$ small enough.
        \end{lem}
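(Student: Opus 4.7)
The plan is to exploit the gap $q^- < \varphi^-$ given by (\ref{7}) by choosing a test function $\phi$ concentrated on a region where $q(x)$ is close to $q^-$, and then comparing the $\varphi^-$-power behavior of the first term of $J_\lambda$ with a smaller power behavior of the second term.

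First I would fix $\varepsilon>0$ small enough so that $q^-+\varepsilon<\varphi^-$, which is possible by hypothesis (\ref{7}). By continuity of $q$ on $\overline{\Omega}$, the set $\Omega_0=\{x\in\Omega:\,q(x)<q^-+\varepsilon\}$ is a nonempty open subset of $\Omega$. I would then pick $\phi\in C_0^\infty(\Omega_0)$ with $\phi\geqslant 0$ and $\phi\not\equiv 0$, extended by zero outside $\Omega_0$. Using Theorem \ref{TT} together with Theorem \ref{s3.2} (both applicable since $\varPhi_{x,y}\in\mathcal{B}_f$ is implicit in our setting), and the fact that $\phi$ has compact support inside $\Omega$ so its Gagliardo integral on $Q$ is controlled by its $C^2$-norm, one sees that $\phi\in W^{s(x,y)}_0L_{\varPhi_{x,y}}(Q)$; in particular
\[
A:=\int_{Q}\varPhi_{x,y}\!\left(\frac{|\phi(x)-\phi(y)|}{|x-y|^{s(x,y)}}\right)d\mu<\infty.
\]

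Next, for $t\in(0,1)$, I would estimate the two terms of $J_\lambda(t\phi)$ separately. For the modular term, Lemma \ref{2.2..} (more precisely the consequence of (\ref{3.}) that $\varPhi_{x,y}(\sigma\tau)\leqslant \sigma^{\varphi^-}\varPhi_{x,y}(\tau)$ for $\sigma\in(0,1)$) yields
\[
\int_{Q}\varPhi_{x,y}\!\left(\frac{|t\phi(x)-t\phi(y)|}{|x-y|^{s(x,y)}}\right)d\mu \;\leqslant\; t^{\varphi^-}A.
\]
For the $L^{q(x)}$ term, since $\phi$ is supported in $\Omega_0$ where $q(x)<q^-+\varepsilon$, and since $t\in(0,1)$ implies $t^{q(x)}\geqslant t^{q^-+\varepsilon}$ on $\Omega_0$, I obtain
\[
\lambda\int_{\Omega}\frac{1}{q(x)}|t\phi|^{q(x)}dx \;\geqslant\; \frac{\lambda\,t^{q^-+\varepsilon}}{q^+}\int_{\Omega_0}|\phi|^{q(x)}dx \;=:\; \lambda\,t^{q^-+\varepsilon}B,
\]
with $B>0$ since $\phi\not\equiv 0$.

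Combining these gives
\[
J_\lambda(t\phi)\;\leqslant\; t^{\varphi^-}A-\lambda t^{q^-+\varepsilon}B \;=\; t^{q^-+\varepsilon}\bigl(t^{\varphi^--q^--\varepsilon}A-\lambda B\bigr).
\]
Because $\varphi^--q^--\varepsilon>0$, the bracketed quantity tends to $-\lambda B<0$ as $t\searrow 0$, so there exists $t_0\in(0,1)$ such that $J_\lambda(t\phi)<0$ for all $t\in(0,t_0)$, proving the lemma. The only mildly technical point is verifying that $\phi\in W^{s(x,y)}_0L_{\varPhi_{x,y}}(Q)$ with finite modular $A$, which follows from the compact support of $\phi$ inside $\Omega$ (so the integrand on $Q\setminus(\Omega\times\Omega)$ decays like $\varPhi_{x,y}\!\bigl(|\phi(x)|/|x-y|^{s(x,y)}\bigr)$ with $|x-y|$ bounded away from $0$) together with the embedding $W^{s^+}L_{\varPhi_{x,y}}\hookrightarrow W^{s(x,y)}L_{\varPhi_{x,y}}$ from Theorem \ref{s3.2}.
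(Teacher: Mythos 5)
Your proof is correct and follows essentially the same strategy as the paper's: exploit the gap $q^-<\varphi^-$ by concentrating the test function on a region $\Omega_0$ where $q$ is within $\varepsilon$ of $q^-$, bound the modular term above by $t^{\varphi^-}$ using Lemma~\ref{2.2..}, and bound the $q(x)$-integral below by $t^{q^-+\varepsilon}$. The only cosmetic differences are that you take $\phi$ supported \emph{inside} $\Omega_0$ (the paper takes $\phi\equiv 1$ on $\overline{\Omega_0}$ with larger support, which is equivalent in effect), and that your formulation only requires $A<\infty$ rather than the paper's $A>0$, which is a small simplification; you also rightly flag the membership $\phi\in W_0^{s(x,y)}L_{\varPhi_{x,y}}(Q)$, a point the paper leaves implicit.
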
    
      \begin{proof}[\textbf{Proof}] 
    By assumption   $(\ref{7})$ we can chose $\varepsilon_0>0$ such that $q^-+\varepsilon_0<\varphi^-$. On the other hand, since $q\in C(\overline{\Omega})$, it follows that there exists an open set $\Omega_0\subset \Omega$ such that $|q(x)-q^-|<\varepsilon_0$ for all $x\in \Omega_0$. Thus, $q(x)\leqslant q^-+\varepsilon_0<\varphi^-$ for all $x\in \Omega_0$. 
    Let $\phi\in C_0^\infty(\Omega)$ be such that $supp(\phi)\supset \overline{\Omega_0}$, $\phi(x)=1$ for all $x\in \overline{\Omega_0}$, and $0\leqslant \phi\leqslant 1$ in $\overline{\Omega_0}$. Then  for any $t\in(0,1)$, we have
    $$
    \begin{aligned}
    J_\lambda(t\phi)& =\displaystyle\int_{Q}\varPhi_{x,y}\left(t|D^{s(x,y)}\phi| \right)d\mu-\lambda\displaystyle\int_{\Omega}\dfrac{1}{q(x)}t^{q(x)}|\phi|^{q(x)}dx\\
    &\leqslant \displaystyle\int_{Q}t^{\varphi^-}\varPhi_{x,y}\left(|D^{s(x,y)}\phi| \right)d\mu-\lambda\displaystyle\int_{\Omega_0}\dfrac{t^{q(x)}}{q(x)}|\phi|^{q(x)}dx\\
        &\leqslant t^{\varphi^-}\displaystyle\int_{Q}\varPhi_{x,y}\left(|D^{s(x,y)}\phi| \right)d\mu-\dfrac{\lambda t^{q^-+\varepsilon_0}}{q^+}\displaystyle\int_{\Omega_0}|\phi|^{q(x)}dx.
        \end{aligned}
       $$
       Therefore $J_\lambda(t\phi)<0$ for $t<\delta^{1/(\varphi^--q^--\varepsilon_0)}$ with
       $$0<\delta<\min\left\lbrace 1,~~ \dfrac{\dfrac{\lambda}{q^+}\displaystyle\int_{\Omega_0}|\phi|^{q(x)}dx}{\displaystyle\int_{Q}\varPhi_{x,y}\left(|D^{s(x,y)}\phi| \right)d\mu}\right\rbrace. $$
      This is possible, since we claim that 
      $$  \displaystyle\int_{Q}\varPhi_{x,y}\left(|D^{s(x,y)}\phi| \right)d\mu>0.$$
      Indeed, it is clear that
      $$\int_{\Omega_0}|\phi|^{q(x)}dx\leqslant \int_{\Omega}|\phi|^{q(x)}dx\leqslant \int_{\Omega}|\phi|^{q^-}dx.$$
      On the other hand, since  $W^{s(x,y)}_0L_{\varPhi_{x,y}}(Q)$ is continuously embedded in $L^{q^-}(\Omega)$, it follows that there exists a positive constant $c$  such that 
      $$\|\phi\|_{q^-}\leqslant c ||\phi||.$$
      The last two inequalities imply that
      $$\|\phi\|>0$$
      and combining this fact with Proposition $\ref{smod}$, the claim follows at once. The proof of the lemma is now completed.
      \end{proof}  
   \begin{proof}[\textbf{Proof of Theorem $\ref{mn}$}]    
      Let $\lambda_*>0$ be defined as in $(\ref{an40})$ and $\lambda\in (0,\lambda_*)$. By Lemma $\ref{anlem3}$ it follows that on the boundary oh the ball centered in the origin and of radius $\rho$ in $W^{s(x,y)}_0L_{\varPhi_{x,y}}(Q)$, denoted by $B_\rho(0)$, we have 
            $$\inf\limits_{\partial B_\rho(0)}J_\lambda>0.$$
         On the other hand, by Lemma $\ref{anlem4}$, there exists $\phi \in W^{s(x,y)}_0L_{\varPhi_{x,y}}(Q)$ such that $J_\lambda(t\phi)<0$ for all $t>0$ small enough. Moreover for any $u\in B_\rho(0)$, we have 
         $$
                  \begin{aligned}
              J_\lambda(u)\geqslant  \|u\|^{\varphi^-}-\dfrac{\lambda c_1^{q^-}}{q^-}\|u\|^{q^-}.
                    \end{aligned}
                               $$
         It follows that
         $$-\infty<c:=\inf\limits_{\overline{B_\rho(0)}} J_\lambda<0.$$   
         We let now $0<\varepsilon <\inf\limits_{\partial  B_\rho(0)}  J_\lambda -  \inf\limits_{B_\rho(0)} J_\lambda.$    Applying Theorem $\ref{th1}$ to the functional 
         $J_\lambda : \overline{B_\rho(0)}\longrightarrow \R$, we find $u_\varepsilon \in \overline{B_\rho(0)}$ such that 
          $$
               \left\{ 
                    \begin{array}{clclc}
                  J_\lambda(u_\varepsilon)&<\inf\limits_{\overline{B_\rho(0)}} J_\lambda+\varepsilon,& \\\\
                    J_\lambda(u_\varepsilon)&< J_\lambda(u)+\varepsilon ||u-u_\varepsilon||,& \text{  } u\neq u_\varepsilon.
                    \end{array}
                    \right. 
                 $$
          Since  $J_\lambda(u_\varepsilon)\leqslant  \inf\limits_{\overline{B_\rho(0)}} J_\lambda+\varepsilon\leqslant \inf\limits_{B_\rho(0)} J_\lambda+\varepsilon < \inf\limits_{\partial  B_\rho(0)}  J_\lambda$, we deduce $u_\varepsilon  \in B_\rho(0)$. 
          
          Now, we define $\Lambda_\lambda :  \overline{B_\rho(0)}\longrightarrow \R$ by 
          $$\Lambda_\lambda(u)=J_\lambda(u)+\varepsilon||u-u_\varepsilon||.$$
          It's clear that $u_\varepsilon$ is a minimum point of $\Lambda_\lambda$ and then
          $$\dfrac{\Lambda_\lambda(u_\varepsilon+t v)-\Lambda_\lambda(u_\varepsilon)}{t}\geqslant 0$$ 
          for small $t>0$, and any $v\in B_\rho(0).$ The above relation yields 
              $$\dfrac{J_\lambda(u_\varepsilon+t v)-J_\lambda(u_\varepsilon)}{t}+\varepsilon||v||\geqslant 0.$$
              Letting $t\rightarrow$ it follows that $\left\langle J'_{\lambda}(u_\varepsilon),v\right\rangle +\varepsilon ||v||>0$ and we infer that $$||J'_{\lambda}(u_\varepsilon)||_{*}\leqslant \varepsilon.$$
               We deduce that there exists a sequence $\left\lbrace u_n\right\rbrace \subset B_\rho(0)$ such that 
               \begin{equation}\label{an10}
               J_\lambda(u_n) \longrightarrow c \text{ and } J'_\lambda(u_n)\longrightarrow 0.
               \end{equation}
            It is clear that $\left\lbrace u_n\right\rbrace $ is bounded in $W^{s(x,y)}_0L_{\varPhi_{x,y}}(Q)$. Thus, there exists $u_0\in W^{s(x,y)}_0L_{\varPhi_{x,y}}(Q)$, such that up to a subsequence   $\left\lbrace u_n\right\rbrace $ converges weakly to $u_0$ in $W^{s(x,y)}_0L_{\varPhi_{x,y}}(Q)$.

             On the other hand, since $W^{s(x,y)}_0L_{\varPhi_{x,y}}(Q)$ is compactly embedded in $L^{q(x)}(\Omega)$, it follows that $\left\lbrace u_n\right\rbrace $ converges strongly to $u_0$ in $L^{q(x)}(\Omega)$.
             Then by H\"{o}lder inequality, we have that 
             $$ \lim\limits_{n\rightarrow \infty}\int_\Omega |u_n|^{q(x)-2}u_n(u_n-u_0)dx=0.$$
             This fact and relation $(\ref{an10})$, implies that $$\lim\limits_{n\rightarrow \infty}\left\langle J'_\lambda(u_n), u_n-u_0\right\rangle =0.$$
              Thus we deduce that 
              \begin{equation}\label{an31}
              \lim\limits_{n\rightarrow \infty} \int_{Q}a_{x,y}(|D^{s(x,y)}u_n|)D^{s(x,y)}u_n\left( D^{s(x,y)}u_n-D^{s(x,y)}u_0 \right)d\mu =0.
              \end{equation}
              Since $\left\lbrace u_n\right\rbrace $ converge weakly to $u_0$ in $W^{s(x,y)}_0L_{\varPhi_{x,y}}(Q)$, by relation $(\ref{an31})$, we find that 
              \begingroup\makeatletter\def\f@size{8}\check@mathfonts \begin{equation}\label{an32}
               \lim\limits_{n\rightarrow \infty} \int_{Q}\left( a_{x,y}(|D^{s(x,y)}u_n|)D^{s(x,y)}u_n-a_{x,y}(|D^{s(x,y)}u_0|)D^{s(x,y)}u_0\right) \left( D^{s(x,y)}u_n-D^{s(x,y)}u_0 \right)d\mu =0.
              \end{equation}\endgroup
              Since, $\varPhi_{x,y}$ is convex, we have
              \begingroup\makeatletter\def\f@size{9}\check@mathfonts $$\varPhi_{x,y}(|D^{s(x,y)}u|)\leqslant \varPhi_{x,y}\left( \dfrac{|D^{s(x,y)}u+D^{s(x,y)}v|}{2}\right) +a_{x,y}(|D^{s(x,y)}u|)D^{s(x,y)}u\dfrac{D^{s(x,y)}u-D^{s(x,y)}v}{2}$$\endgroup
                \begingroup\makeatletter\def\f@size{9}\check@mathfonts$$\varPhi_{x,y}(|D^{s(x,y)}v|)\leqslant \varPhi_{x,y}\left( \dfrac{|D^{s(x,y)}u+D^{s(x,y)}v|}{2}\right) +a_{x,y}(|D^{s(x,y)}v|)D^{s(x,y)}v\dfrac{D^{s(x,y)}v-D^{s(x,y)}u}{2}$$\endgroup
    for every $u,v \in W^{s(x,y)}_0L_{\varPhi_{x,y}}(Q)$.
     Adding the above two relations and integrating over $Q$, we find that
 \begingroup\makeatletter\def\f@size{7}\check@mathfonts \begin{equation}\label{an33}
     \begin{aligned}
     \dfrac{1}{2}&\int_{Q} \left( a_{x,y}(|D^{s(x,y)}u|)D^{s(x,y)}u-a_{x,y}(|D^{s(x,y)}v|)D^{s(x,y)}v\right) \left(D^{s(x,y)}u-D^{s(x,y)}v \right)d\mu\\
     &\geqslant \int_Q\varPhi_{x,y}(|D^{s(x,y)}u|) d\mu+ \int_Q\varPhi_{x,y}(|D^{s(x,y)}v|) d\mu- 2\int_Q\varPhi_{x,y}\left( \dfrac{|D^{s(x,y)}u-D^{s(x,y)}v|}{2}\right)d\mu,
       \end{aligned}
   \end{equation}\endgroup      
for every $u,v \in W^{s(x,y)}_0L_{\varPhi_{x,y}}(Q)$.
On the other hand, since for each, we know that $\varPhi_{x,y}~:~ [0,\infty) \rightarrow \R$ is an increasing continuous function, with $\varPhi_{x,y}(0)=0$. Then by the conditions \hyperref[v1]{$(\varPhi_1)$} and \hyperref[f2.]{$(\varPhi_2)$}, we can apply \cite[Lemma 2.1]{Lam}  in order to obtain 

 \begingroup\makeatletter\def\f@size{9}\check@mathfonts \begin{equation}\label{an34}
 \begin{aligned}
 \dfrac{1}{2}&\left[ \int_Q\varPhi_{x,y}(|D^{s(x,y)}u|) d\mu+ \int_Q\varPhi_{x,y}(|D^{s(x,y)}v|) d\mu\right] \\
& \geqslant \int_Q\varPhi_{x,y}\left( \dfrac{|D^{s(x,y)}u+D^{s(x,y)}v|}{2}\right)d\mu+\int_Q\varPhi_{x,y}\left( \dfrac{|D^{s(x,y)}u-D^{s(x,y)}v|}{2}\right)d\mu
 \end{aligned}
 \end{equation}\endgroup
 for every $u,v \in W^{s(x,y)}_0L_{\varPhi_{x,y}}(Q)$.
 By $(\ref{an33})$ and $(\ref{an34})$,  we have
  \begingroup\makeatletter\def\f@size{9}\check@mathfonts\begin{equation}\label{an35}
 \begin{aligned}
 & \int_{Q} \left( a_{x,y}(|D^{s(x,y)}u|)D^{s(x,y)}u-a_{x,y}(|D^{s(x,y)}v|)D^{s(x,y)}v\right) \left(D^{s(x,y)}u-D^{s(x,y)}v \right)d\mu\\
 &\geqslant 4\int_Q\varPhi_i\left( \dfrac{|D^{s(x,y)}u-D^{s(x,y)}v|}{2}\right)d\mu
 \end{aligned}
 \end{equation}\endgroup
 for every $u,v \in W^{s(x,y)}_0L_{\varPhi_{x,y}}(Q)$.
 
 Relations $(\ref{an32})$ and $(\ref{an35})$ show that $\left\lbrace u_n\right\rbrace $ converge strongly to $u_0$ in $W^{s(x,y)}_0L_{\varPhi_{x,y}}(Q)$. Then by relation $(\ref{an10})$, we have 
$$J_\lambda(u_0)=c_1>0 ~~\text{and}~~ J'_\lambda(u_0)=0.$$
   Then, $u_0$ is a nontrivial weak solution for Problem \hyperref[P]{$(\mathcal{P}_{a})$}. This complete the proof.
                \end{proof}
       
           \section{Examples}  \label{S5} 
           
           In this section we point certain examples of functions $\varphi_{x,y}$ and $\varPhi_{x,y}$  which illustrate the results of this paper.
         
    \begin{ex}
    		As a first example, we can take

           $$\varphi_{x,y}(t)=\varphi_{1}(x,y,t) =p(x,y)\dfrac{|t|^{p(x,y)-2}t}{\log (1+|t|)}  ~~\text{ for all}~~ t\geqslant 0,$$ 
           and thus, 
        $$\varPhi_{x,y}(t)=p(x,y)\dfrac{|t|^{p(x,y)}}{\log (1+|t|)}+\int_{0}^{|t|}\dfrac{\tau^{p(x,y)}}{(1+\tau)(\log(1+\tau))^2}d\tau,$$     
        with $p\in C(Q)$ satisfies $2\leqslant p(x,y) <N$ for all $(x,y)\in  Q$.
    \end{ex}
        Then, in this case problem  \hyperref[P]{$(\mathcal{P}_a)$} becomes 
        $$\label{P2}
         (\mathcal{P}_{1}) \hspace*{0.5cm} \left\{ 
           \begin{array}{clclc}
        
        \left(-\Delta\right) _{\varphi_{1}}^{s(x,.)} u & = &   \lambda |u|^{q(x)-2}u    & \text{ in }& \Omega \\\\
           u & = & 0 \hspace*{0.2cm} \hspace*{0.2cm} & \text{ in } & \R^N\setminus \Omega,
           \end{array}
           \right. 
        $$ 
       with 
        $$\left(-\Delta\right) _{\varphi_{1}}^{s(x,.)} u(x)=p.v.\int_{\Omega}\dfrac{p(x,y)|D^{s(x,y)}u|^{p(x,y)-2}D^{s(x,y)}u}{\log(1+|D^{s(x,y)}u|)|x-y|^{N+s(x,y)}}~dy~~ \text{ for all  }  x \in \Omega.$$  
       It easy to see that $\varPhi_{x,y}$ is a Musielak function and satisfy condition \hyperref[v3]{$(\varPhi_3)$}.
        Moreover for each $(x,y)\in  Q$ fixed, by Example 3 on p 243 in \cite{cl}, we have 
        $$p(x,y)-1\leqslant \dfrac{t\varphi_{x,y}(t)}{\varPhi_{x,y}(t)}\leqslant p(x,y) ~~\forall (x,y) \in Q~~\forall t\geqslant 0.$$
        Thus, \hyperref[v1]{$(\varPhi_1)$} holds true with $\varphi^-=p^--1$ and $\varphi^+=p^+$.\\
        Finally, we point out that trivial computations imply that 
        $$\dfrac{d^2(\varPhi_{x,y}(\sqrt{t}))}{dt^2}\geqslant 0$$
        for all $(x,y)\in Q$ and $t\geqslant 0$. Thus, relation \hyperref[f2.]{$(\varPhi_2)$} hold true.

        Hence, we derive an existence result for  problem \hyperref[P2]{$(\mathcal{P}_{1})$} which is given by the following Remark.
       \begin{rem}\label{c2}
       If $p^--1>q^-$. 
                Then there exists $\lambda_*>0$ such that for any $\lambda\in (0,\lambda_*)$ is an eigenvalue of Problem \hyperref[P2]{$(\mathcal{P}_{1})$}.
       \end{rem}
       \begin{ex}
       	As a second example, we can take  
         $$\varphi_{x,y}(t)=\varphi_2(x,y,t)=p(x,y)\log(1+\alpha+|t|)|t|^{p(x,y)-2}t ~~\text{for all}~~ t\geqslant 0$$
         and  so,
          $$\varPhi_{x,y}(t)=\log(1+|t|)|t|^{p(x,y)}-\int_{0}^{|t|}\dfrac{\tau^{p(x,y)}}{1+\tau}d\tau,$$
          where $\alpha>0$ is a constant and   $p\in C(\overline{\Omega}\times\overline{\Omega})$ satisfies $2\leqslant p(x,y) <N$ for all $(x,y)\in  Q$.\end{ex}
          
           Then we consider the following fractional $p(x,.)$-problem  
$$
              \label{P3} (\mathcal{P}_2)~~  \left\{ 
                 \begin{array}{clclc}
              
              \left( -\Delta\right) _{\varphi_2}^{s(x,y)}u & = &  \lambda |u|^{q(x)-2}u  & \text{ in }& \Omega \\\\
                 u & = & 0 ~~  & \text{ in } & \R^N\setminus \Omega,
                 \end{array}
                 \right. 
              $$ 
              where 
              $$ \left( -\Delta\right) _{\varphi_2}^{s(x,y)}u(x)=p.v.\int_{\Omega}\dfrac{p(x,y)\log(1+\alpha+|D^{s(x,y)}u|).|D^{s(x,y)}u|^{p(x,y)-2}D^{s(x,y)}u}{|x-y|^{N+s(x,y)}}~dy~$$
               for all  $x \in \Omega.$
          
           It easy to see that $\varPhi_{x,y}$ is a Musielak function and satisfy condition \hyperref[v3]{$(\varPhi_3)$}.
           Next, we remark that   for each $(x,y)\in  Q$ fixed,   we have 
           $$p(x,y)\leqslant \dfrac{t\varphi_{x,y}(t)}{\varPhi_{x,y}(t)} ~~\text{for all}~~  t\geqslant 0.$$
     By the above information and  taking $\varphi^-=p^-$, we have 
     $$1<p^-\leqslant \dfrac{t.\varphi_{x,y}(t)}{\varPhi_{x,y}(t)} \text{  }~~\text{for all}~~(x,y)\in  Q ~~ \text{   }\text{ and all } t\geqslant 0.$$   
      On the other hand, some simple computations imply 
      $$\lim\limits_{t \rightarrow \infty }\dfrac{t.\varphi_{x,y}(t)}{\varPhi_{x,y}(t)} =p(x,y)\text{  }\text{ for all }(x,y)\in Q,$$  
      and 
      $$\lim\limits_{t \rightarrow 0 }\dfrac{t.\varphi_{x,y}(t)}{\varPhi_{x,y}(t)} =p(x,y)+1 \text{  }\text{ for all }(x,y)\in Q,$$
      Thus, we remark that $\dfrac{t.\varphi_{x,y}(t)}{\varPhi_{x,y}(t)}$ is continuous on $Q\times [0,\infty)$. Moreover, $$1<p^-\leqslant \lim\limits_{t \rightarrow 0 }\dfrac{t.\varphi_{x,y}(t)}{\varPhi_{x,y}(t)}\leqslant p^++1<\infty,$$ and $$1<p^-\leqslant \lim\limits_{t \rightarrow \infty }\dfrac{t.\varphi_{x,y}(t)}{\varPhi_{x,y}(t)}\leqslant p^++1<\infty.$$ It follows that 
      $$\varphi^+<\infty.$$
      We conclude that relation \hyperref[v1]{$(\varPhi_1)$} is satisfied.  Finally, we point out that trivial computations imply that 
          $$\dfrac{d^2(\varPhi_{x,y}(\sqrt{t}))}{dt^2}\geqslant 0$$
          for all $(x,y)\in Q$ and $t\geqslant 0$. Thus, relation \hyperref[f2.]{$(\varPhi_2)$} hold true.

     \begin{rem}\label{c3}
         If $p^->q^-$. 
    Then there exists $\lambda_*>0$ such that for any $\lambda\in (0,\lambda_*)$ is an eigenvalue of Problem \hyperref[P3]{$(\mathcal{P}_{2})$}.
        \end{rem}
        
      \section*{Disclosure statement}
                 No potential conflict of interest was reported by the authors.
                  \section*{Data Availability Statement}
                 My manuscript has no associate data.   
             


\end{document}